\def\qed{
\vbox{\hrule\hbox{\vrule\hbox to 5pt{\vbox to 8pt{\vfil}\hfil}\vrule}\hrule}}
\def\qedsmall{
\vbox{\hrule\hbox{\vrule\hbox to 4pt{\vbox to 4pt{\vfil}\hfil}\vrule}\hrule}}
\def\endproof{\unskip \nobreak \hskip0pt plus 1fill \qquad \qed}
\def\9{\"u9}
\newtheoremstyle{custom}{3pt}{3pt}{}{}{\bfseries}{:}{.5em}{}
\theoremstyle{custom}
\newtheorem{theorem}{Theorem}[section]
\newtheorem{definition}[theorem]{Definition}
\newtheorem{remark}[theorem]{Remark}
\newtheorem{proposition}[theorem]{Proposition}
\numberwithin{equation}{section}
\numberwithin{figure}{section}
\numberwithin{table}{section}
\newenvironment{keywords}{{\bf Keywords:}}{}
\renewenvironment{abstract}{\begin{small} {\bf Abstract:}}{\end{small}}
\def\R{\mathbb{R}}
\def\N{\mathbb{N}}
\def\KK{{\cal K}}
\def\WW{{\cal W}}
\def\OO{\mathcal{O}}
\def\eps{\varepsilon}
\def\beq{\begin{equation}}
\def\eeq{\end{equation}}
\def\bea{\begin{eqnarray}}
\def\eea{\end{eqnarray}}
\def\beaa{\begin{eqnarray*}}
\def\eeaa{\end{eqnarray*}}
\begin{document}
\bibliographystyle{siam_nolines}

\title{Computing Lyapunov functions using deep neural networks}
\author{
Lars Gr\"une\\
Mathematical Institute\\
University of Bayreuth\\
95440 Bayreuth, Germany\\
{\tt lars.gruene@uni-bayreuth.de}
}
\date{\today}

\maketitle
\renewcommand{\thefootnote}{\arabic{footnote}}
\pagestyle{myheadings}
\thispagestyle{plain}
\markboth{LARS GR\"UNE}{COMPUTING LYAPUNOV FUNCTIONS USING DEEP NEURAL NETWORKS}

\begin{abstract} 
We propose a deep neural network architecture and a training algorithm for computing approximate Lyapunov functions of systems of nonlinear ordinary differential equations. Under the assumption that the system admits a compositional Lyapunov function, we prove that the number of neurons needed for an approximation of a Lyapunov function with fixed accuracy grows only polynomially in the state dimension, i.e., the proposed approach is able to overcome the curse of dimensionality. We show that nonlinear systems satisfying a small-gain condition admit compositional Lyapunov functions. Numerical examples in up to ten space dimensions illustrate the performance of the training scheme.
\end{abstract}


\begin{keywords}
deep neural network, Lyapunov function, stability, small-gain condition, curse of dimensionality, training algorithm
\end{keywords}

\section{Introduction}
Lyapunov functions are one of the key tools for the stability analysis of nonlinear systems. They do not only serve as a certificate for asymptotic stability of an equilibrium but also allow to give estimates about its domain of attraction or to quantify its robustness with respect to perturbations, for instance, in the sense of input-to-state stability. However, explicit analytic expressions for Lyapunov functions are often not available. Hence, the numerical computation of Lyapunov functions has attracted significant attention during the last decades. Known approaches use series expansions \cite{KiNB82}, finite element approaches \cite{CaGW00}, representations by radial basis functions \cite{Gies07} or piecewise affine functions, see \cite{Hafs07}, or sum-of-squares techniques, see \cite{AndP15} and the references therein. For a comprehensive overview we refer to the survey by \cite{GieH15}. Often, a characterization of the Lyapunov function via a suitable partial differential equation (PDE) such as Zubov's equation \cite{Zubo64} is used as the basis for these numerical computations.

The usual approaches have in common that the number of degrees of freedom needed for storing the Lyapunov function (or an approximation thereof with a fixed approximation error) grows very rapidly --- typically exponentially --- with the dimension of the state space. This is the well known curse of dimensionality, which leads to the fact that the mentioned approaches are confined to low dimensional systems.

In general, the same is true if a deep neural network is used as an approximation architecture. While it is known that such a network can approximate every $C^1$-function arbitrarily well, see \cite{Cybe89,HoSW89}, the number of neurons needed for this purpose typically grows exponentially with the state dimension, as well, see \cite[Theorem 2.1]{Mhas96} or Theorem \ref{thm:univ}, below. However, this situation changes 
if additional structural assumptions are imposed, which is the approach we follow in this paper. Recently, there has been a large activity in exploiting suitable structural properties for solving high-dimensional PDEs using neural networks \cite{DaLM20,EHJ17,BeGJ20,HaJE18,HuPW20,HJKN20,HuJT19,ReiZ19,SirS18} and since Lyapunov functions can also be represented by PDEs, these results provided the first source of inspiration for this paper. 

As we will show in this paper, in the Lyapunov function context a suitable property for making the neural network approach efficient is the existence of what we call a compositional Lyapunov function, cf.\ Definition \ref{def:complf}, below. The importance of compositionality for overcoming the curse of dimensionality is explained in \cite{PMRML17}, and this reference provides the second source of inspiration for this paper. Using similar arguments as in \cite{PMRML17}, we show that a suitably designed deep neural network can compute approximations of compositional Lyapunov function with a given required accuracy using a number of neurons that grows only polynomially with the dimension of the system. In other words, we show that the curse of dimensionality can be avoided. 

The important question then is how restrictive the assumption of the existence of a compositional Lyapunov function is. It turns out that a classical systems theoretic tool for stability analysis of large-scale systems, namely small-gain analysis --- here in its nonlinear form based on input-to-state stability, see, e.g., \cite{DaIW11,DaRW10,JiMW96,JiTP94,Ruef07} --- provides conditions on the dynamics under which a compositional Lyapunov function exists. This insight together with the design of a corresponding deep neural network architecture with two hidden layers constitutes the theoretical contribution of this paper. This is complemented by an algorithmic contribution in form of a loss function for a training algorithm for neural networks that is based on a suitable partial differential inequality, and by numerical tests that illustrate the efficiency of the proposed ``DeepLyapunov'' method.

There have been earlier attempts to use neural networks for approximating Lyapunov functions. The paper \cite{Serp05} proposes a learning algorithm based on increments instead of derivatives, which relies on successive updates of the network parameters rather than a standard learning algorithm. This paper does not provide numerical examples illustrating the performance of the approach. In \cite{PetP06} only a local Lyapunov function is computed, by using local derivative information in the learning algorithm. In the paper \cite{NKSJ08} the coefficients of a polynomial Lyapunov functions are computed, rather than representing the Lyapunov function directly by a neural network as in our paper. There are also various papers dealing with the more general problem of computing control Lyapunov functions (clfs). \cite{LonB93} considers this problem by assuming exact representability of the Lyapunov function by a neural network with one hidden layer. The paper \cite{RiBK18} considers clfs of a particular quadratic form in discrete time. It implements the decrease condition via classification rather than differential inequalities. The paper \cite{KhZB14} considers clfs for models from robotics and optimizes the parameters of a quadratic Lyapunov function candidate. Finally, among the many papers considering neural network based solutions of Hamilton-Jacobi-Bellman equations some also yield Lyapunov functions. For instance, this is done in \cite{AbKL05}, in which neural networks with one hidden layer are considered. A related technique is to represent stabilizing controllers by neural networks, where again Lyapunov functions can be used for the stability analysis, see, e.g., \cite{LeJY98}, or structural properties allowing for such a representation are investigated \cite{Sont92}. The latter reference is conceptually similar to this paper in the sense that beneficial structural properties of the dynamics and the neural networks are investigated, however, the problem under consideration is different. In summary, all these references differ in several aspects from the approach proposed in this paper. Yet, the main difference is that none of them carries out a complexity analysis or provides a network structure that is provably able to overcome the curse of dimensionality and performs well for higher dimensional nonlinear systems in numerical experiments. This is the distinctive contribution of this paper.

The remainder of the paper is organized as follows. In Section \ref{sec:problem} we formulate the problem. In Section \ref{sec:complf} we explain the concept of compositional Lyapunov functions and its relation to small-gain theory. Section \ref{sec:nn} gives a brief introduction into neural networks, mainly in order to clarify the notation used in Section \ref{sec:nnlf}. In this section we propose a neural network architecture and prove that it allows to store approximations to compositional Lyapunov functions avoiding the curse of dimensionality. In Section \ref{sec:training} we propose loss functions for a training algorithm that allows to actually compute Lyapunov functions using the proposed neural networks. Numerical results illustrating the performance of our approach are given in Section \ref{sec:numerics}. In Section \ref{sec:discussion} we discuss various aspects and extensions of our approach before we conclude our paper in Section \ref{sec:conclusion}. The results from Section \ref{sec:nnlf} are contained in preliminary form in the conference paper \cite{Grue20}. However, \cite{Grue20} did not address training algorithms nor did it present numerical results. Moreover, the proofs in Section \ref{sec:nnlf} are given in more detailed form in the present paper.

\section{Problem Formulation}\label{sec:problem}

We consider nonlinear ordinary differential equations of the form
\begin{equation} \dot x(t) = f(x(t)) \label{eq:sys}\end{equation}
with a Lipschitz continuous $f:\R^n\to\R^n$. We assume that $x=0$ is an asymptotically stable equilibrium and that $K_n\subset\R^n$ is a compact set in its domain of attraction.

It is well known (see, e.g., \cite{Hahn67}) that asymptotic stability is equivalent to the existence of a Lyapunov function according to the following definition.
\begin{definition} A continuously differentiable function $V:O\to\R$ defined on an open set $O$ with $0\in O$ is a Lyapunov function if it satisfies the following properties: $V(0)=0$, $V(x)>0$ for all $x\ne 0$, and the orbital derivative $DV(x)f(x)$, i.e., the derivative $DV$ of $V$ multiplied with the direction of the vector field $f$, satisfies
\begin{equation} DV(x)f(x) \le - h(x) \label{eq:lf}\end{equation}
for a function $h:\R^n\to\R$ with $h(x)>0$ for all $x\in O\setminus\{0\}$. If $O=\R^n$ and $V(x)\to\infty$ as $\|x\|\to\infty$, then $V$ is called a global Lyapunov function.
\end{definition}

If $V$ is a Lyapunov function, then any connected component of a sublevel set of $V$ containing $0$ is part of the domain of attraction of $x=0$. Hence, $K_n$ is in the domain of attraction of $x=0$ if it is contained in such a set. In this case we call $V$ a Lyapunov function on $K_n$.

It is our goal in this paper to design a neural network that is able to compute an approximation of such a Lyapunov function on $K_n\subset\R^n$ in an efficient manner. Efficient here is meant in the sense that the computational effort as well as the storage effort grow moderately with the space dimension. While this will not be possible in general, we will show that it works for Lyapunov functions satisfying a particular structure, which we call compositional Lyapunov functions. This structure is motivated by recent results on approximation properties of neural networks \cite{PMRML17}, but it turns out that it is also well known in systems theory, as it corresponds to a particular kind of a small-gain condition, which we describe in the next section. Throughout this paper, we will consider families of set $K_n\subset\R^n$ in varying space dimensions $n\in\N$ for which we make the standing assumption
\beq \mbox{there exists $C>0$ with $K_n\subset [-C,C]^n$ for all $n\in\N$}\label{eq:Kn}\eeq
in order to avoid that the sets $K_n$ grow unboundedly in the $\ell_\infty$-norm with the dimension $n$.

\section{Compositional Lyapunov functions and small-gain theory}\label{sec:complf}

The particular compositional structure we consider is motivated by \cite{PMRML17}, where the approximation of general functions via neural networks is considered. In order to define this structure, the system \eqref{eq:sys} is divided into $s$ subsystems $\Sigma_i$ of dimensions $d_i$, $i=1,\ldots,s$. To this end, the state vector $x=(x_1,\ldots,x_n)^T$ and the vector field $f$ are split up as
\[ x = \left(\begin{array}{c} z_1\\ z_2\\ \vdots \\ z_s\end{array}\right) \; \mbox{ and } \; f(x) = \left(\begin{array}{c} f_1(x)\\ f_2(x)\\ \vdots \\ f_s(x)\end{array}\right),\]
with $z_i=(x_{\hat d_{i-1}+1},\ldots,x_{\hat d_i})\in\R^{d_i}$ and $f_i:\R^n\to\R^{d_i}$ denoting the state and dynamics of each $\Sigma_i$, $i=1,\ldots,s$, with state dimension $d_i\in\N$ and $\hat d_i=\sum_{j=1}^i d_j$. With 
\[ z_{-i} := \left(\begin{array}{c} z_1\\ \vdots \\ z_{i-1}\\z_{i+1}\\ \vdots \\ z_s\end{array}\right) \]
and by rearranging the arguments of the $f_i$, the dynamics of each $\Sigma_i$ can then be written as
\[ \dot z_i(t) = f_i(z_i(t),z_{-i}(t)), \quad i=1,\ldots,s. \]
Using this decomposition, we can define the following Lyapunov function structure\footnote{In order to avoid an overly technical presentation, the exposition in this section is limited to global Lyapunov functions.}.
\begin{definition} A Lyapunov function $V$ for \eqref{eq:sys} is called {\em compositional}, if there exist $C^1$-functions $\widehat V_i:\R^{d_i}\to\R$ such that $V$ is of the form
\beq V(x) = \sum_{i=1}^s \widehat V_i(z_i). \label{eq:sglf}\eeq
\label{def:complf}\end{definition}

In the remainder of this section we discuss conditions on $f$ under which a Lyapunov function of the form \eqref{eq:sglf} exists.

One situation in which a system \eqref{eq:sys} admits a compositional Lyapunov function is when the $f_i$ do not depend on $z_{-i}$, i.e., if $f_i(z_i,z_{-i}) = f_i(z_i)$. This means that the subsystems are completely decoupled. In this case, consider Lyapunov functions $\hat V_i$ of $\dot x_i = f_i(x_i)$ on compact sets $\widehat K_i\subset\R^{d_i}$, and $V$ from \eqref{eq:sglf}. Then, clearly $V(x)\ge 0$ and $V(x)=0$ if and only if $x=0$. Moreover,
\[ DV(x)f(x) = \sum_{i=1}^s DV_i(z_i)f_i(z_i) <0 \]
for all $x\in K_n=\prod_{i=1}^s \widehat K_i$ with $x\ne 0$. 

Assuming that $f$ decomposes into $s$ completely decoupled subsystems is relatively restrictive. Fortunately, a similar construction can also be made if the $f$ are coupled, provided the coupling is such that it does not affect the stability of the overall subsystem. The systems theoretic tool for this approach is nonlinear small-gain theory, which relies on the input-to-state stability (ISS) property introduced in \cite{Sont89}. It goes back to \cite{JiMW96,JiTP94} and in the form for large-scale systems we require here it was developed in the thesis \cite{Ruef07} and in a series of papers around 2010, see, e.g., \cite{DaIW11,DaRW10} and the references therein. ISS small-gain conditions can be based on trajectories or Lyapunov functions and exist in different variants. Here, we use the variant that is most convenient for obtaining approximation results because it yields a smooth Lyapunov function. We briefly discuss one other variant in Section \ref{sec:discussion}\eqref{it:maxlf}.

For formulating the small gain condition, we assume that for the subsystems $\Sigma_i$ there exist $C^1$ ISS-Lyapunov functions $V_i:\R^{d_i}\to\R$, satisfying for all $z_i\in\R^{d_i}$ $z_{-i}\in\R^{n-d_i}$
\[ DV_i(z_i)f_i(z_i,z_{-i}) \le -\alpha_{i}(\|z_i\|) + \sum_{j\ne i} \gamma_{ij}(V_j(z_j)) \]
with rates $\alpha_i\in\KK_\infty$ and gains $\gamma_{ij}\in\KK_\infty$,\footnote{As usual, we define $\KK_\infty$ to be the space of continuous functions $\alpha:[0,\infty)\to[0,\infty)$ with $\alpha(0)=0$ and $\alpha$ is strictly increasing to $\infty$.} $i,j=1\ldots,s$, $i\ne j$. 
Here, the states $z_{-i}$ of the other subsystems are interpreted as the input to the $i$-th subsystem and the term $\sum_{j\ne i} \gamma_{ij}(V_j(z_j))$ in the ISS inequality quantifies how much this input affects the stability of the $i$-th subsystem. Particularly, the larger the ISS-gains $\gamma_{ij}$ are, the more the other systems' influence can affect the decrease of the Lyapunov function $V_i$. Setting $\gamma_{ii}:= 0$, we define the map $\Gamma:[0,\infty)^s\to[0,\infty)^s$ by 
\[ \Gamma(r) := \left(\sum_{j=1}^s\gamma_{1j}(r_j),\ldots,\sum_{j=1}^s\gamma_{sj}(r_j)\right)^T\]
and the diagonal operator $A:[0,\infty)^s\to[0,\infty)^s$ by 
\[ A(r) := \left(\alpha_{1}(r_1),\ldots,\alpha_{s}(r_s)\right)^T.\]

\begin{definition} \label{def:sg} We say that \eqref{eq:sys} satisfies the small-gain condition, if there is a decomposition into subsystems $\Sigma_i$, $i=1,\ldots,s$, with ISS Lyapunov functions $V_i$ satisfying the following condition: there are bounded positive definite\footnote{A continuous function $\rho:[0,\infty)\to[0,\infty)$ is called positive definite if $\rho(0)=0$ and $\rho(r)>0$ for all $r>0$.} functions $\eta_i$, $i=1,\ldots,s$, satisfying $\int_0^\infty \eta_i(\alpha_i(r))dr = \infty$ and such that for $\eta=(\eta_1,\ldots,\eta_s)^T$ the inequality
\[ \eta(r)^T\Gamma\circ A(r) < \eta(r)^Tr\]
holds for all $r\in[0,\infty)^s$ with $r\ne 0$.
\end{definition}

It is easily seen that this inequality is satisfied whenever the gains $\gamma_{ij}$ are sufficiently small, which explains the name small-gain condition. 
The following theorem then follows from Theorem 4.1 in \cite{DaIW11}.

\begin{theorem}
Assume that the small-gain conditions from Definition \ref{def:sg} hold. Then $V$ from \eqref{eq:sglf} is a Lyapunov function for the $C^1$-functions $\widehat V_i:\R^{d_i}\to\R$ given by
\[ \widehat V_i(z_i):= \int_0^{V_i(z_i)} \lambda_i(\tau)d\tau \]
where $\lambda_i(\tau) = \eta_i(\alpha_i(\tau))$. 
\label{thm:sg}\end{theorem}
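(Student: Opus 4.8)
The plan is to verify directly that $V(x)=\sum_{i=1}^s\widehat V_i(z_i)$ meets every requirement in the definition of a global Lyapunov function, using Theorem~4.1 of \cite{DaIW11} as the source of the quantitative decrease estimate while making the structure of the argument explicit. The argument splits into an elementary half about $V$ itself (continuous differentiability, positivity, radial unboundedness) and the decisive half, the sign of the orbital derivative, which is where the small-gain condition enters.

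For the elementary half: since $\alpha_i\in\KK_\infty$ and $\eta_i$ is a continuous positive definite function, $\lambda_i=\eta_i\circ\alpha_i$ is continuous, nonnegative, and strictly positive away from the origin. Hence each $\widehat V_i$ is $C^1$ with $D\widehat V_i(z_i)=\lambda_i(V_i(z_i))\,DV_i(z_i)$ by the fundamental theorem of calculus and the chain rule; moreover $\widehat V_i\ge 0$ with $\widehat V_i(z_i)=0$ exactly for $z_i=0$ (using that the ISS-Lyapunov function $V_i$ is positive definite), and $\widehat V_i(z_i)\to\infty$ as $\|z_i\|\to\infty$ because $V_i(z_i)\to\infty$ and $\int_0^\infty\lambda_i(\tau)\,d\tau=\int_0^\infty\eta_i(\alpha_i(\tau))\,d\tau=\infty$ by the integrability hypothesis of Definition~\ref{def:sg} (the boundedness of the $\eta_i$ is the companion hypothesis and is not needed here). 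Summing over $i$ gives $V\in C^1$, $V(0)=0$, $V(x)>0$ for $x\ne0$, and $V(x)\to\infty$ as $\|x\|\to\infty$.

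For the orbital derivative, the chain rule gives
\[
DV(x)f(x)=\sum_{i=1}^s\lambda_i\bigl(V_i(z_i)\bigr)\,DV_i(z_i)f_i(z_i,z_{-i}),
\]
and since each weight $\lambda_i(V_i(z_i))$ is nonnegative the ISS-Lyapunov inequalities may be inserted termwise, yielding
\[
DV(x)f(x)\ \le\ \sum_{i=1}^s\lambda_i\bigl(V_i(z_i)\bigr)\Bigl(-\alpha_i(\|z_i\|)+\sum_{j\ne i}\gamma_{ij}\bigl(V_j(z_j)\bigr)\Bigr).
\]
Writing $r=(V_1(z_1),\dots,V_s(z_s))$, which is nonzero precisely when $x\ne0$, and using the standard $\KK_\infty$ sandwich bounds of the ISS-Lyapunov functions $V_i$ to express the decay terms through $r$, the right-hand side can be rewritten purely in terms of the small-gain data $\alpha_i,\gamma_{ij},\eta_i$, with the weights $\eta_i(\alpha_i(r_i))$ playing the role of $\eta$ in Definition~\ref{def:sg}. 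The strict small-gain inequality $\eta(r)^T\Gamma\circ A(r)<\eta(r)^Tr$ is, after the corresponding reparametrization, exactly the assertion that this expression is negative for all $r\ne0$. Hence $DV(x)f(x)<0$ for every $x\ne0$, and taking $h(x):=-DV(x)f(x)$ gives a function with $h(x)>0$ on $\R^n\setminus\{0\}$, which is all the definition requires. Since each $\widehat V_i$ is defined on all of $\R^{d_i}$, $V$ is a global Lyapunov function of the compositional form \eqref{eq:sglf}.

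The main obstacle is this last reduction: one must match the weights $\lambda_i=\eta_i\circ\alpha_i$ that come out of differentiating the reparametrized $\widehat V_i$ with the vector small-gain inequality, carefully tracking where the decay rates $\alpha_i$ act (on $\|z_i\|$ versus on $V_i(z_i)$) and how the composition $\Gamma\circ A$ enters. This bookkeeping, together with the fact that the $\eta_i$ of Definition~\ref{def:sg} are chosen precisely so that the weighted sum of the ISS inequalities becomes negative, is the substance imported from Theorem~4.1 of \cite{DaIW11}; continuous differentiability, positivity, properness, and the passage from strict negativity to an admissible $h$ are routine.
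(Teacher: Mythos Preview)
Your proposal is correct and takes essentially the same approach as the paper: the paper gives no self-contained proof at all, stating only that the theorem ``follows from Theorem~4.1 in \cite{DaIW11}'', and your argument likewise defers the decisive small-gain bookkeeping to that same reference while additionally spelling out the routine verifications ($C^1$-regularity, positive definiteness, radial unboundedness) that the paper leaves implicit.
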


In \cite{DaIW11}, the property from Definition~\ref{def:sg} is called a \emph{weak} small-gain condition. This is because if the system \eqref{eq:sys} has an additional input (that is taken into account in the assumptions on the $V_i$), then the construction of $V$ yields an integral ISS Lyapunov function as opposed to an ISS Lyapunov function. Under a stronger version of the small-gain condition, the same construction yields an ISS Lyapunov function. We briefly discuss corresponding extensions of our approach in Section \ref{sec:discussion}\eqref{it:u}.

We note that for various reasons small-gain conditions are not easy to check and to apply: the gains $\gamma_{ij}$ may be difficult to estimate, the functions $\eta_i$ may be hard to find and, above all, appropriate Lyapunov functions $V_i$ for the subsystems may be nontrivial to construct. However, none of these ingredients need to be known for our approach. Moreover, not even the number and the dimension of the subsystems needs to be known and we will also be able to define the $z_i$ in a more general way than we did in this section. All that needs to be assumed in what follows is that a compositional Lyapunov function $V$ of the form \eqref{eq:sglf} exists. In summary, the small-gain theory just presented only serves to show that it is realistic to assume the existence of such a $V$, but the particular subsystem structure does not need to be known for constructing it. Rather, provided that an upper bound for the dimension of the subsystems is known, the resulting compositional form of $V$ will be detected by the training algorithm of the neural network.

\section{Deep neural networks}\label{sec:nn}

This section briefly summarizes the known results about approximation properties of deep neural networks that we are going to use in the subsequent section. A deep neural network is a computational architecture that has several inputs, which are processed through $\ell\ge 1$ hidden layers of neurons. The values in the neurons of the layer with the largest $\ell$ are used in order to compute the output of the network. In this paper, we will only consider feedforward networks, in which the input is processed consecutively through the layers $1$, $2$, \ldots, $\ell$. For our purpose of representing Lyapunov functions, we will use networks with the input vector $x=(x_1,\ldots,x_n)^T\in\R^n$ and a scalar output $W(x;\theta)\in\R$. Here, the vector $\theta\in\R^P$ represents the free parameters in the network that need to be tuned (or ``learned'') in order to obtain the desired output. In our case, the output shall approximate a Lyapunov function, i.e., we want to find $\theta^*$ such that $W(x;\theta^*)\approx V(x)$ for a Lyapunov function $V$ and all $x\in K_n$, where $K_n$ is the compact set on which $V$ shall be computed.
Figure \ref{fig:gen_nn} shows generic neural networks with one and two hidden layers. 
\begin{figure}[htb]
\begin{center}
\includegraphics[width=8cm]{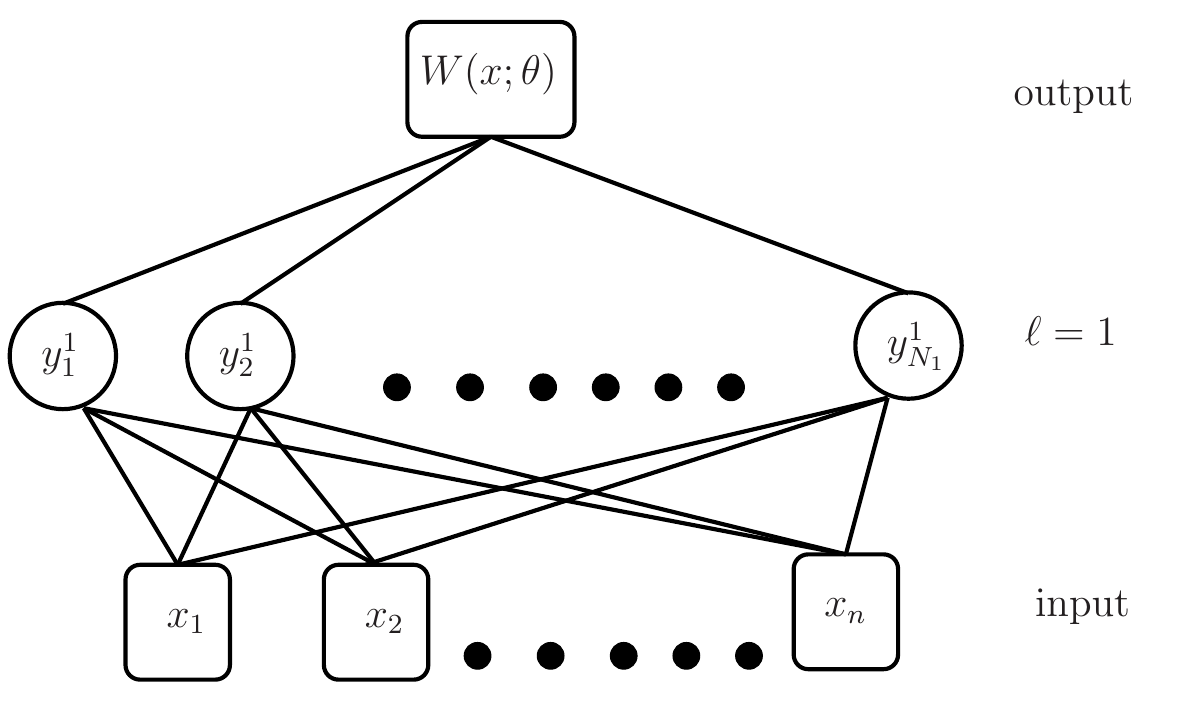}\\[10mm]
\includegraphics[width=8cm]{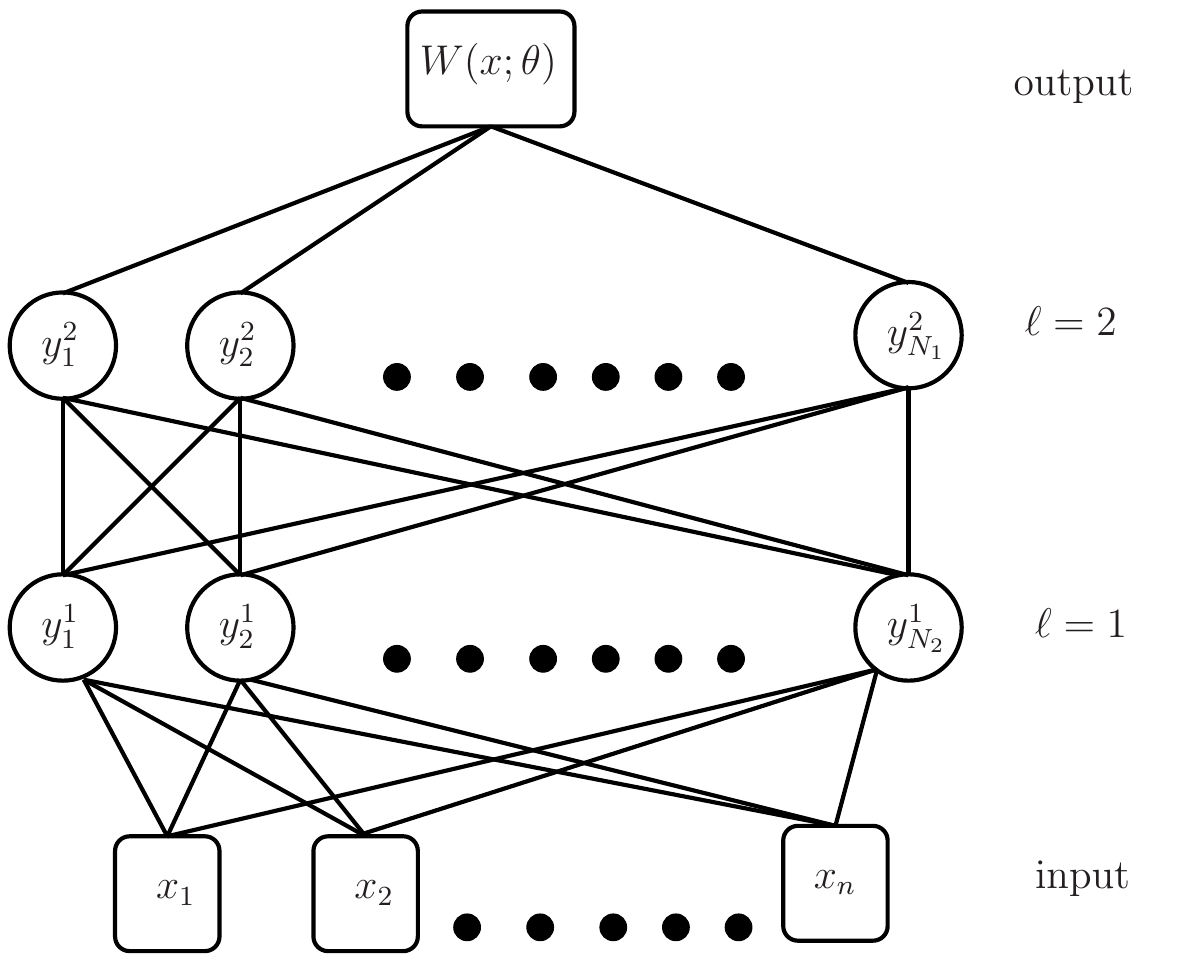}
\end{center}
\caption{Neural network with $1$ and $2$ hidden layers}
\label{fig:gen_nn}
\end{figure}

Here, the lowest layer is the input layer, followed by one or two hidden layers numbered with $\ell$, and the output layer. 
The number $\ell_{\max}$ determines the number of hidden layers, here $\ell_{\max}=1$ or $2$. Each hidden layer consists of $N_\ell$ neurons and the overall number of neurons in the hidden layers is denoted by $N=\sum_{\ell=1}^{\ell_{\max}} N_\ell$. The neurons are indexed using the number of their layer $\ell$ and their position in the layer $k$. Every neuron has a scalar value $y^\ell_k\in\R$ and for each layer these values are collected in the vector $y^{\ell}=(y^\ell_1,\ldots,y^\ell_{N_\ell})^T\in\R^{N_\ell}$. The values of the neurons at the lowest level are given by the inputs, i.e., $y^{0}=x\in\R^n$. The values of the neurons in the hidden layers are determined by the formula
\[ y_k^\ell = \sigma^\ell(w_k^\ell \cdot y^{\ell-1} + b_k^\ell),\] 
for $k=1,\ldots N_\ell$, where $\sigma^\ell:\R\to\R$ is a so called activation function and $w_k^\ell\in\R^{N_{\ell-1}}$, $a_k^\ell,b_k^\ell\in\R$ are the parameters of the layer. In our implementation, below, we will use the softplus activation function $\sigma^\ell(r)=\ln(e^r+1)$ and the linear activation function $\sigma^\ell(r)=r$, the latter for implementing a linear change of coordinates. With $x\cdot y$ we denote the Euclidean scalar product between two vectors $x,y\in\R^n$. In the output layer, the values from the topmost hidden layer $\ell=\ell_{\max}$ are affine linearly combined to deliver the output, i.e., 
\beq W(x;\theta) = \sum_{k=1}^{N_{\ell_{\max}}} a_ky_k^{\ell_{\max}} + c = \sum_{k=1}^{N_{\ell_{\max}}} a_k \sigma^{\ell_{\max}}(w_k^{\ell_{\max}} \cdot y^{\ell_{\max}-1} + b_k^{\ell_{\max}}) + c.\label{eq:output}\eeq
The vector $\theta$ collects all parameters $a_k$, $c$, $w_k^\ell$, $b_k^\ell$ of the network. 

In case of one hidden layer, in which $\ell_{\max}=1$ and thus $y^{\ell_{\max}-1}=y^0=x$, we obtain the closed-form expression
\[ W(x;\theta) = \sum_{k=1}^{N_1} a_k \sigma^1(w_k^1\cdot x + b_k^1) + c.\]
The universal approximation theorem states that a neural network with one hidden layer can approximate all smooth functions arbitrarily well. In its qualitative version, going back to \cite{Cybe89,HoSW89}, it states that the set of functions that can be approximated by neural networks with one hidden layer is dense in the set of continuous functions. In Theorem \ref{thm:univ}, below, we state a quantitative version, given as Theorem 1 in \cite{PMRML17}, which is a reformulation of Theorem 2.1 in \cite{Mhas96}. 

For its formulation consider the compact sets $K_n\subset\R^n$ satisfying \eqref{eq:Kn} on which we want to perform our computation. For a continuous function $g:K_n\to \R$ we define the infinity-norm over $K_n$ as
\[ \|g\|_{\infty,K_n} := \max_{x\in K_n}|g(x)|. \]
We then define the set of functions 
\[ \WW_m^n := \left\{ g\in C^m(K_n,\R) \,\left|\, \sum_{1\le |\alpha|\le m} \| D_\alpha g\|_{\infty,K_n} \le 1\right.\right\} \]
where $C^m(K_n,\R)$ denoted the functions from $K_n$ to $\R$ that are $m$-times continuously differentiable, $\alpha$ are multiindices of length $|\alpha|$ with entries $\alpha_i\in\{1,\ldots,n\}$, $i=1,\ldots,|\alpha|$ and $D_\alpha g = \partial g^{|\alpha|}/\partial \alpha_1\ldots \partial\alpha_{|\alpha|}$ denotes the $m$-th directional derivative with respect to $\alpha$.

\begin{theorem} Let $\sigma:\R\to\R$ be infinitely differentiable and not a polynomial\/\footnote{Polynomials are excluded because in the proof of this theorem it is needed that the derivatives $\sigma^{(k)}$ for all degrees $k\in\N$ do not vanish. See also the discussion after Theorem 1 in \cite{PMRML17}.}. Then, for any $\eps>0$, a neural network with one hidden layer provides an approximation 
\[ \inf_{\theta\in\R^P} \|W(x;\theta) - g(x)\|_{\infty,K_n} \le \eps \]
for all $g\in \WW_m^n$ with a number of $N$ of neurons satisfying
\[ N = \OO\left(\eps^{-\frac{n}{m}}\right)\]
and this is the best possible.
\label{thm:univ}\end{theorem}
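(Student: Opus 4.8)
The statement has two halves: the upper bound that $N=\OO(\eps^{-n/m})$ neurons suffice, and the optimality assertion. As remarked above this is precisely Theorem~1 of \cite{PMRML17}, itself a reformulation of Theorem~2.1 of \cite{Mhas96}, so in the paper one would simply cite these; here I sketch the reasoning behind each half.

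For the upper bound the plan is to reduce uniform approximation of $g\in\WW_m^n$ to uniform approximation by polynomials. The first step shows that a network with one hidden layer can reproduce, to arbitrarily small error on $K_n$, every polynomial of degree at most $\ell$ using $\OO(\ell^n)$ neurons. The key observation is that differentiating a single neuron in its parameters produces monomials: since $\sigma$ is $C^\infty$ and not a polynomial, there is $b_0\in\R$ with $\sigma^{(k)}(b_0)\ne0$ for every $k\in\N$, and for a fixed $w\in\R^n$ the map $t\mapsto\sigma\bigl(t\,(w\cdot x)+b_0\bigr)$ has $k$-th derivative at $t=0$ equal to $\sigma^{(k)}(b_0)\,(w\cdot x)^k$. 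Approximating this derivative by a divided difference of finitely many shifted and rescaled copies of $\sigma$ exhibits the ridge function $x\mapsto(w\cdot x)^k$ as a uniform-on-$K_n$ limit of sums of a bounded number of neurons (the bound depending only on $k$). Taking finitely many directions $w$ for which the resulting ridge powers span all polynomials of degree $\le\ell$ in $n$ variables --- a space of dimension $\binom{n+\ell}{\ell}=\OO(\ell^n)$ for fixed $n$ --- gives the claim. The second step is a classical Jackson-type estimate: every $g\in\WW_m^n$ is approximable uniformly on $K_n\subset[-C,C]^n$ by a polynomial of degree $\ell$ with error $\OO(\ell^{-m})$, the constant depending only on $m$ and $C$, proved by an averaged-Taylor (Bramble--Hilbert) argument exploiting $\sum_{1\le|\alpha|\le m}\|D_\alpha g\|_{\infty,K_n}\le1$. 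Choosing $\ell$ of order $\eps^{-1/m}$ and composing the two steps yields a one-hidden-layer network with $N=\OO(\ell^n)=\OO(\eps^{-n/m})$ neurons achieving uniform error $\OO(\eps)$ over all of $\WW_m^n$; rescaling $\eps$ gives the stated bound.

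For optimality the plan is a continuous nonlinear width argument. A one-hidden-layer network with $N$ neurons and input dimension $n$ is determined by $P=\OO(N)$ real parameters (for $n$ fixed), and $\theta\mapsto W(\cdot\,;\theta)$ is continuous from $\R^P$ into $C(K_n)$ with the norm $\|\cdot\|_{\infty,K_n}$. If such networks approximated every element of $\WW_m^n$ to within $\eps$, the manifold ($P$-)width of $\WW_m^n$ in $C(K_n)$ would be $\le\eps$. By a theorem of DeVore, Howard and Micchelli on continuously parametrized nonlinear approximation, this width for a $C^m$-ball on an $n$-dimensional set is bounded below by $c\,N^{-m/n}$ (matching the rate of the classical Kolmogorov width), whence $\eps\ge c\,N^{-m/n}$, i.e.\ $N\ge c'\,\eps^{-n/m}$.

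I expect the polynomial-generation step to be the crux of the upper bound: one must control the divided-difference error uniformly on $K_n$ so that it genuinely collapses to $(w\cdot x)^k$, and one must verify that enough ridge powers span all polynomials of degree $\le\ell$ --- this is exactly where the hypothesis that $\sigma$ is not a polynomial is indispensable, since for a polynomial $\sigma$ the attainable degrees are capped. The lower bound, though less elementary, is essentially a citation of the nonlinear-width literature and contributes no new difficulty.
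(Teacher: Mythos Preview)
Your proposal is correct and matches the paper's approach exactly: the paper's proof consists solely of the citation to \cite[Theorem 1]{PMRML17} and \cite[Theorem 2.1]{Mhas96}, plus the remark that extending from $K_n=[-1,1]^n$ to $K_n\subset[-C,C]^n$ is straightforward. Your sketch of the underlying argument is a helpful addition; the one minor caveat is that a general $C^\infty$ non-polynomial $\sigma$ need not admit a \emph{single} $b_0$ with $\sigma^{(k)}(b_0)\ne0$ for every $k$ --- rather, for each $k$ there is some $b_k$ with $\sigma^{(k)}(b_k)\ne0$ (this is what the footnote intends), and that weaker fact is all the divided-difference construction requires.
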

\begin{proof} See \cite[Theorem 1]{PMRML17} or \cite[Theorem 2.1]{Mhas96} for this result with $K_n=[-1,1]^n$. The extension to $K_n\subset [-C,C]^n$ is straightforward.
\end{proof}

We note that if $\theta\in\R^P$ realizing the infimum in the inequality in Theorem \ref{thm:univ} exists, then in general it depends on $g$.
Theorem \ref{thm:univ} implies that one can readily use a network with one hidden layer for approximating Lyapunov functions. However, in general the number $N$ of neurons needed for a fixed approximation accuracy $\eps>0$ grows exponentially in $n$, and so does the number of parameters in $\theta$. This means that the storage requirement as well as the effort to determine $\theta$ easily exceeds all reasonable bounds already for moderate dimensions $n$. Hence, this approach also suffers from the curse of dimensionality. In the next section, we will therefore exploit the particular structure of compositional Lyapunov functions in order to obtain neural networks with (asymptotically) much lower $N$.

\section{Neural network structure and complexity analysis}\label{sec:nnlf}

\subsection{The case of known subsystems}\label{sec:known}

For our first result, for fixed $d_{\max}\in\N$ we consider the family of functions
\[ F_1^{d_{\max}} := \left\{ f:\R^n\to\R^n \left| \begin{array}{l} n\in\N,\, f \mbox{ is Lipschitz and } \eqref{eq:sys}
\mbox{ admits a compositional}\\
\mbox{Lyapunov function \eqref{eq:sglf} with }\max_{i=1,\ldots,s} d_i \le d_{\max}
\end{array}\right.\right\}. \]
We assume that for each $f\in F_1^{d_{\max}}$ we know the dimensions $d_i$ and states $z_i$ of the subsystems $\Sigma_i$, $i=1,\ldots,s$, of the corresponding decomposition. For this situation, we use a network with one hidden layer of the form depicted in Figure \ref{fig:Lf_nn_l1}.

\begin{figure}[htb]
\begin{center}
\includegraphics[width=8cm]{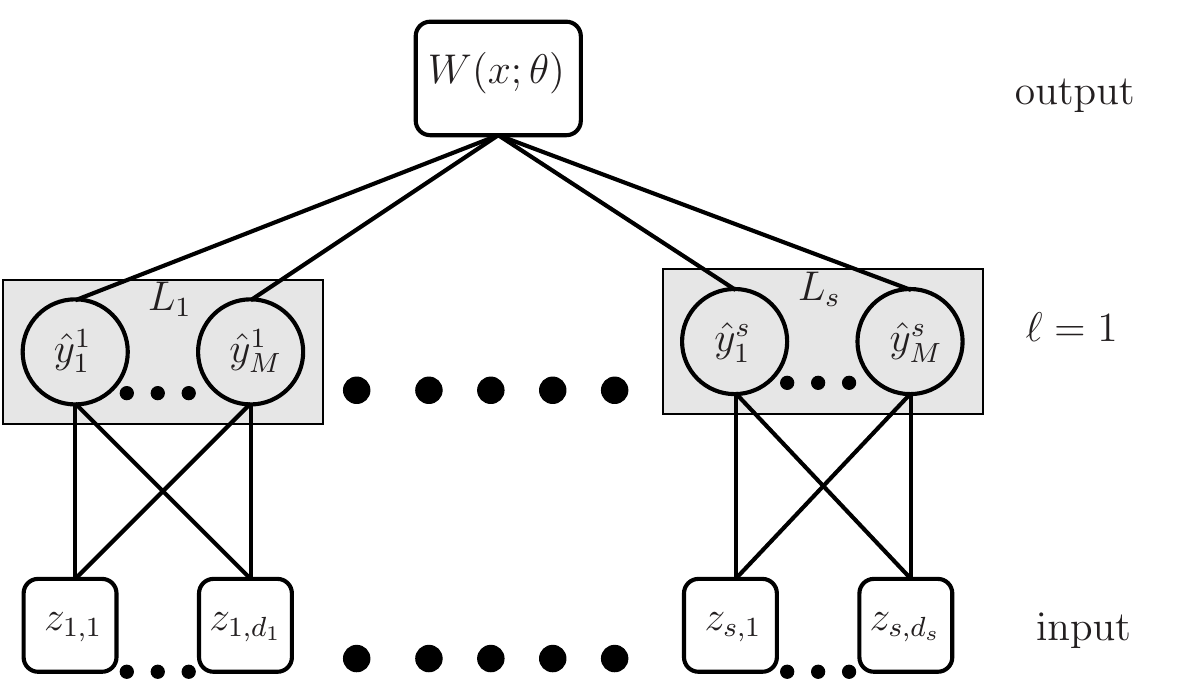}
\end{center}
\caption{Neural network for Lyapunov functions, $f\in F_1^{d_{\max}}$}
\label{fig:Lf_nn_l1}
\end{figure}

In this network, the single hidden layer for $\ell=1$ consists of $s$ sublayers $L_1, \ldots, L_s$. The input of each of the neurons in $L_i$ is the state vector $z_i=(z_{i,1},\ldots,z_{i,d_i})^T$ of the subsystem $\Sigma_i$, which forms a part of the state vector $x$. We assume that every sublayer $L_i$ has $M$ neurons, whose parameters and values are denoted by, respectively, $\hat w_k^i$, $\hat a_k^i$, $\hat b_k^i$ and $\hat y_k^i$, $k=1,\ldots,d_i$. Since $s\le n$, the layer contains $N^1=sM\le nM$ neurons, which is also the total number $N$ of neurons in the hidden layers. The values $\hat y_k^i$ are then given by 
\[ \hat y_k^i = \sigma^1(\hat w_k^i \cdot z_i + \hat b_k^i)\] 
and the overall output of the network is 
\[ W(x;\theta) = \sum_{i=1}^s \sum_{k=1}^{d_i} \hat a_k^i\sigma^1(\hat w_k^i \cdot z_i + \hat b_k^i) + c.\]

\begin{proposition} Given compact sets $K_n\subset\R^n$ satisfying \eqref{eq:Kn}, for each $f\in F_1^{d_{\max}}$ there exist a Lyapunov function $V_f$ such that the following holds. For each $\eps\in(0,1)$ the network depicted in and described after Figure \ref{fig:Lf_nn_l1} with $\sigma^1:\R\to\R$ infinitely differentiable and not polynomial, provides an approximation $\inf_{\theta\in\R^P} \|W(x;\theta) - V_f(x)\|_{\infty,K_n} \le \eps$ for all $f\in F_1^{d_{\max}}$ with a number of $N$ of neurons satisfying

\[ N = \OO\left(n^{d_{\max}+1}\eps^{-d_{\max}}\right).\]
\label{prop:V1}\end{proposition}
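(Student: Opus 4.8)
The plan is to reduce Proposition \ref{prop:V1} to an application of the quantitative universal approximation Theorem \ref{thm:univ} on each subsystem separately, and then add up the errors. First I would fix, for each $f\in F_1^{d_{\max}}$, a compositional Lyapunov function $V_f(x)=\sum_{i=1}^s \widehat V_i(z_i)$ guaranteed by the assumption $f\in F_1^{d_{\max}}$; this is the $V_f$ whose existence the statement asserts. The key observation is that each $\widehat V_i$ is a $C^1$ (indeed $C^m$ for any $m$ we like, or at least $C^1$, which we use with $m=1$) function of only $d_i\le d_{\max}$ variables, defined on the compact set $\widehat K_i\subset\R^{d_i}$ obtained by projecting $K_n$ onto the $z_i$-coordinates. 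By \eqref{eq:Kn} we have $\widehat K_i\subset[-C,C]^{d_i}$, so the standing hypothesis of Theorem \ref{thm:univ} is met uniformly in $i$ and $n$.

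The next step is a normalization: Theorem \ref{thm:univ} is stated for the unit ball $\WW_m^n$, so I would rescale. Each $\widehat V_i$ has some bound $B_i:=\sum_{1\le|\alpha|\le 1}\|D_\alpha \widehat V_i\|_{\infty,\widehat K_i}$ on its first derivatives; dividing by $\max(B_i,1)$ (or by a uniform constant, absorbing dependence on $f$ into the implied constant, exactly as the proposition allows since $V_f$ may depend on $f$) puts $\widehat V_i/B_i$ into $\WW_1^{d_i}$. Applying Theorem \ref{thm:univ} with $m=1$, state dimension $d_i$, and target accuracy $\eps/s$ (so that the $s$ errors sum to $\eps$), there is a one-hidden-layer subnetwork with $M_i=\OO((\eps/s)^{-d_i})=\OO((\eps/s)^{-d_{\max}})$ neurons taking $z_i$ as input and approximating $\widehat V_i$ to within $\eps/s$ in $\|\cdot\|_{\infty,\widehat K_i}$, hence to within $\eps/s$ in $\|\cdot\|_{\infty,K_n}$ since the error depends only on $z_i$. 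Setting $M:=\max_i M_i$ and padding the smaller sublayers with zero-weight neurons gives the architecture of Figure \ref{fig:Lf_nn_l1}: $s$ sublayers of $M$ neurons each, $N=sM$ total. Summing the subnetwork outputs (and combining the additive constants into a single $c$) yields $W(x;\theta)$ with $\|W(\cdot;\theta)-V_f\|_{\infty,K_n}\le s\cdot(\eps/s)=\eps$. For the neuron count, $s\le n$ and $M=\OO((\eps/s)^{-d_{\max}})=\OO(s^{d_{\max}}\eps^{-d_{\max}})=\OO(n^{d_{\max}}\eps^{-d_{\max}})$, so $N=sM=\OO(n^{d_{\max}+1}\eps^{-d_{\max}})$, as claimed.

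The main obstacle, and the only place requiring genuine care, is the uniformity of the constants over the whole family $F_1^{d_{\max}}$. Theorem \ref{thm:univ}'s $\OO(\cdot)$ hides a constant that a priori may depend on the target function through its higher-order $C^m$ norms; here I sidestep this by working with $m=1$ and by allowing the implied constant in the proposition to depend on $f$ (which is permitted: the statement says "for each $f$ there exists $V_f$" and bounds $N$ in $\OO$-notation, i.e.\ up to an $f$-dependent, $n$- and $\eps$-independent constant), so the rescaling constants $B_i$ are harmless. One should also check that the mild restriction in Theorem \ref{thm:univ} — $\sigma^1$ infinitely differentiable and not a polynomial — is exactly the hypothesis imposed on $\sigma^1$ in the proposition, which it is. A final small point is that Theorem \ref{thm:univ} as quoted involves $C^m$ functions, whereas Definition \ref{def:complf} only gives $\widehat V_i\in C^1$; taking $m=1$ throughout resolves this, and the exponent $d_{\max}$ in the bound corresponds precisely to $n/m = d_i/1 \le d_{\max}$.
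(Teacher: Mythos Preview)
Your argument is correct and follows essentially the same route as the paper: approximate each $\widehat V_i$ separately via Theorem~\ref{thm:univ} with per-subsystem accuracy $\eps/s$ (the paper uses $\eps/n$, which is the same up to the harmless bound $s\le n$), then sum. The one place where the paper is slightly cleaner is the normalization: rather than dividing each $\widehat V_i$ by its own $B_i$ and absorbing the resulting $B_i^{d_i}$ factors into an $f$-dependent $\OO$-constant, the paper picks a single $\mu>0$ with $\mu\widehat V_i\in\WW_1^{d_i}$ for all $i$ and \emph{defines} $V_f:=\mu V$; since a positive scalar multiple of a Lyapunov function is again a Lyapunov function, this is legitimate and makes the neuron count genuinely independent of $f$ (note that letting the $\OO$-constant depend on $f$ is awkward, because $n$ is itself determined by $f$, so an $f$-dependent constant could in principle hide $n$-dependence).
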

\begin{proof} Consider the $C^1$-functions $\widehat V_i$ from \eqref{eq:sglf}. We choose $\mu>0$ maximal such that $\mu \widehat V_i$ lies in $\WW_1^{d_i}$ and set $V_f=\mu V$ with $V=\sum_{i=1}^s \widehat V_i$ from \eqref{eq:sglf}.
We denote the projection of the set $K_n$ corresponding to the variables $z_i$ by $\widehat{K}_i$.
Then, by Theorem \ref{thm:univ} there exist values $\hat a_k^i$, $\hat b_k^i$, $\hat w_k^i$, $\hat c^i$, $k=1,\ldots,d_i$, such that the output 
\[ o_{L_i}(z_i) := \sum_{k=1}^{d_i} \hat a_k^i\sigma^1(\hat w_k^i \cdot z_i + \hat b_k^i) + \hat c^i \]
of each sublayer $L_i$ satisfies
\[ \left\| o_{L_i} - \mu \widehat V_i\right\|_{\infty, \widehat K_i} \le \eps/n \]
for a number of neurons 
\[ M=\OO\left((\eps/n)^{-d_i}\right) \le \; \OO\left((\eps/n)^{-d_{\max}}\right)=\OO\left(n^{d_{\max}}\eps^{-d_{\max}}\right), \]
noting that the inequality used here holds whenever $\eps\le n$, which is satisfied since $\eps<1$. 
Since this is true for all sublayers $L_1$, $\ldots$, $L_s$, by merging the weights $\hat a^i_k$ and $\hat c^i$ into the $a_k$ and $c$ in \eqref{eq:output} we obtain $W(x;\theta) = \sum_{i=1}^s o_{L_i}(z_i)$ and thus
\[ \| W(\cdot;\theta) - V_f\|_{\infty,K_n} \le \sum_{i=1}^s\left\| o_{L_i} - \mu \widehat V_i\right\|_{\infty,\widehat K_i} \le s\eps/n \le \eps \]
with the overall number of neurons $N \le nM = \OO\left(n^{d_{\max}+1}\eps^{-d_{\max}}\right)$.
\end{proof}

\subsection{The case of unknown subsystems}

The approach in the previous subsection requires the knowledge of the subsystems $\Sigma_i$ in order to design the appropriate neural network. This is a rather unrealistic assumption that requires a lot of preliminary analysis in order to set up an appropriate network. Fortunately, there is a remedy for this, which moreover applies to a larger family of systems than $F_1^{d_{\max}}$ considered above. To this end, we consider the family of maps 
\[ F_2^{d_{\max},c} := 
\left\{ f:\R^n\to\R^n \left| \begin{array}{l} n\in\N,\, \mbox{there is an invertible } T\in\R^{n\times n} \mbox{ with}\\ 
\|T\|_\infty\le c, \mbox{ such that } \tilde f \in F_1^{d_{\max}}\\ 
\mbox{for }\tilde f(\tilde x) := Tf(T^{-1}\tilde x)
\end{array}\right.\right\}. \]
Here we make the notational convention that $\tilde x = Tx$ and $\|T\|_\infty$ denotes the matrix norm induced by the vector norm $\|x\|_\infty = \max_{i=1,\ldots,n} |x_i|$. 

Similar as before, we now assume that the transformed vector field $\tilde f$ allows for a compositional Lyapunov function, corresponding to $\tilde s$ subsystems $\widetilde \Sigma_i$, $i=1,\ldots,\tilde s$, with dimensions $\tilde d_i$ and states $\tilde z_i$. However, in
contrast to Section \ref{sec:known}, now we do {\em not} assume that we know the dimensions $\tilde d_i$ and states $\tilde z_i$ of the subsystems $\widetilde \Sigma_i$, and not even their number $\tilde s$. We also do not need to know the coordinate transformation $T$. 
The neural network that we propose for $f\in F_2^{d_{\max},c}$ is depicted in Figure \ref{fig:Lf_nn_l2}. 

\begin{figure}[htb]
\begin{center}
\includegraphics[width=8cm]{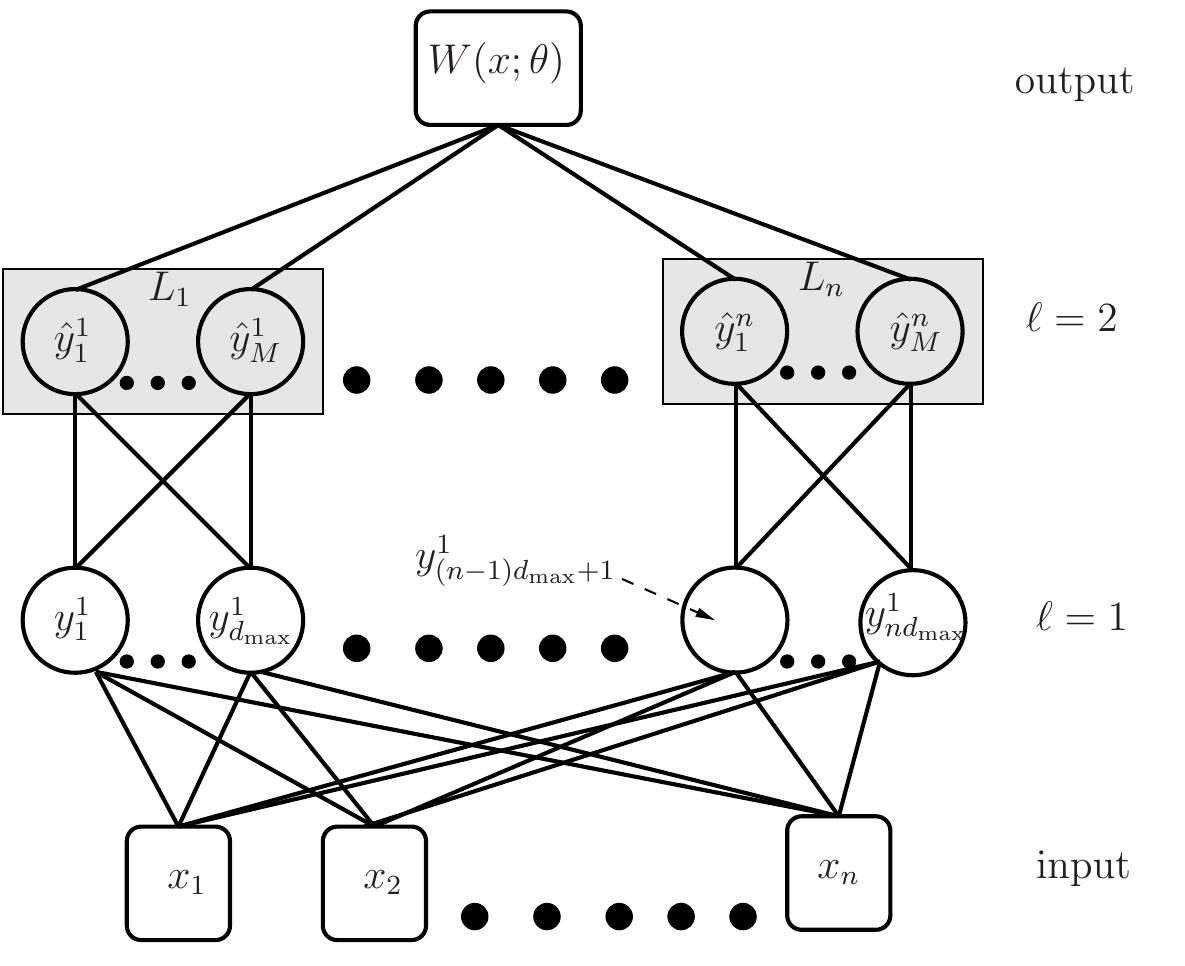}
\end{center}
\caption{Neural network for Lyapunov functions, $f\in F_2^{d_{\max}}$}
\label{fig:Lf_nn_l2}
\end{figure}

Here, we use different activation functions $\sigma^\ell$ in the different levels. While $\sigma^2$ in layer $\ell=2$ is chosen like $\sigma^1$ in Proposition \ref{prop:V1}, in Level $\ell=1$ we use the identity, i.e., the linear activation $\sigma^1(x)=x$. Layer $\ell=2$ consists of $n$ sublayers $L_1$, $\ldots$, $L_n$, each of which has exactly $d_{\max}$ inputs and $M$ neurons. The coefficients and neuron values of each $L_i$ are again denoted with $\hat w_k^i$, $\hat a_k^i$, $\hat b_k^i$ and $\hat y_k^i$, respectively, for $k=1,\ldots,d_{\max}$. The $d_{\max}$-dimensional input of each neuron in $L_i$ is given by 
\[ (y^1_{(i-1)d_{\max}+1},\ldots,y^1_{i d_{\max}})^T =:\bar y^1_i.\] 
We note that this network is a special case of the lower network in Figure \ref{fig:gen_nn}. 

\begin{theorem} Given compact sets $K_n\subset\R^n$ and $c>0$, for each $f\in F_2^{d_{\max},c}$ there exist a Lyapunov function $V_f$ such that the following holds. For each $\eps\in(0,1)$ the network depicted in and described after Figure \ref{fig:Lf_nn_l2} with $\sigma^2:\R\to\R$ infinitely differentiable and not polynomial in layer $\ell=2$ and $\sigma^1(x)=x$ in layer $\ell=1$, provides an approximation $\inf_{\theta\in\R^P} \|W(x;\theta) - V_f(x)\|_{\infty,K_n} \le \eps$ for all $f\in F_2^{d_{\max}}$ with a number of $N$ of neurons satisfying

\[ N = \OO\left(n d_{\max}+n^{d_{\max}+1}\eps^{-d_{\max}}\right).\]
\label{thm:V2}\end{theorem}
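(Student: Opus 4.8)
The plan is to reduce the statement to Proposition~\ref{prop:V1} by exploiting the additional linear layer $\ell=1$: this layer can realize \emph{exactly} the unknown coordinate transformation $T$ together with the (unknown) splitting of the transformed state into subsystem blocks, so that layer $\ell=2$ only has to perform the approximation already carried out by the single hidden layer in Proposition~\ref{prop:V1}.

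First I would fix, for a given $f\in F_2^{d_{\max},c}$, an invertible $T$ with $\|T\|_\infty\le c$ such that $\tilde f(\tilde x):=Tf(T^{-1}\tilde x)\in F_1^{d_{\max}}$, and let $\widetilde V(\tilde x)=\sum_{i=1}^{\tilde s}\widehat V_i(\tilde z_i)$ be an associated compositional Lyapunov function with $\tilde d_i\le d_{\max}$. Choosing $\mu>0$ maximal such that each $\mu\widehat V_i\in\WW_1^{\tilde d_i}$, I would set $V_f(x):=\mu\,\widetilde V(Tx)$. Since $\tilde x=Tx$ yields $DV_f(x)=\mu\,D\widetilde V(Tx)\,T$ and $\tilde f(Tx)=Tf(x)$, the chain rule gives $DV_f(x)f(x)=\mu\,D\widetilde V(\tilde x)\tilde f(\tilde x)$, i.e.\ $\mu$ times the orbital derivative of $\widetilde V$ along $\tilde f$, which is $<0$ for $x\ne0$; combined with $\widetilde V(0)=0$, positivity of $\widetilde V$ away from $0$, and invertibility of $T$ (so $Tx\ne0$ iff $x\ne0$), this shows $V_f$ is a Lyapunov function on $K_n$.

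Next I would build the network. Because $\sum_{i=1}^{\tilde s}\tilde d_i=n$ with each $\tilde d_i\ge1$, we have $\tilde s\le n$, so the $n$ sublayers $L_1,\ldots,L_n$ of layer $\ell=2$ suffice. I would use the linear layer $\ell=1$ (with $N^1=nd_{\max}$ neurons) to compute $y^1=W^1x$, where the $i$-th block of $d_{\max}$ rows of $W^1$ consists, for $i\le\tilde s$, of the rows of $T$ extracting $\tilde z_i$ placed in its first $\tilde d_i$ positions and of zeros in the remaining $d_{\max}-\tilde d_i$ positions, and of zeros for $i>\tilde s$; then $\bar y^1_i=(\tilde z_i,0,\ldots,0)^T\in\R^{d_{\max}}$ for $i\le\tilde s$ and $\bar y^1_i=0$ otherwise, and the entries of $W^1$ are bounded by $\|T\|_\infty\le c$. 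Since $\|Tx\|_\infty\le\|T\|_\infty\|x\|_\infty\le cC$ for $x\in K_n$ by \eqref{eq:Kn}, the set $\widetilde K_n:=TK_n$ lies in $[-cC,cC]^n$ and its projection $\widehat K_i$ onto the $\tilde z_i$-coordinates lies in $[-cC,cC]^{\tilde d_i}$. For $i\le\tilde s$, sublayer $L_i$ is a one-hidden-layer network with $d_{\max}$ inputs (only the first $\tilde d_i$ of which carry information), so Theorem~\ref{thm:univ}, in its version for the box $[-cC,cC]^{\tilde d_i}$, makes its output $o_{L_i}$ satisfy $\|o_{L_i}-\mu\widehat V_i\|_{\infty,\widehat K_i}\le\eps/n$ with $M=\OO((\eps/n)^{-\tilde d_i})\le\OO((\eps/n)^{-d_{\max}})=\OO(n^{d_{\max}}\eps^{-d_{\max}})$ neurons (using $\eps/n<1$); for $i>\tilde s$ I set all output weights $\hat a_k^i=0$, so $L_i$ only contributes a constant. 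Merging the constants into $c$ as in \eqref{eq:output} gives $W(x;\theta)=\sum_{i=1}^{\tilde s}o_{L_i}(\bar y^1_i)$, hence $\|W(\cdot;\theta)-V_f\|_{\infty,K_n}\le\sum_{i=1}^{\tilde s}\|o_{L_i}-\mu\widehat V_i\|_{\infty,\widehat K_i}\le\tilde s\,\eps/n\le\eps$, while the neuron count is $N=N^1+nM=\OO(nd_{\max})+\OO(n^{d_{\max}+1}\eps^{-d_{\max}})$, as claimed.

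I do not expect a genuine obstacle here; the one thing that has to be arranged correctly is that all of the unknown data --- the transformation $T$, the number $\tilde s$, the dimensions $\tilde d_i$, and which coordinates of $Tx$ form each block $\tilde z_i$ --- is encoded \emph{exactly} in the single matrix $W^1$, so that no neurons are needed beyond the $nd_{\max}$ linear ones and the $nM$ inherited from Proposition~\ref{prop:V1}. The remaining work is bookkeeping: zero-padding the subsystem states to dimension $d_{\max}$, deactivating the unused sublayers $L_{\tilde s+1},\ldots,L_n$, checking $\tilde s\le n$, and verifying that the uniform bound \eqref{eq:Kn} is inherited by $TK_n$ with the enlarged constant $cC$. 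This last point is precisely where the hypothesis $\|T\|_\infty\le c$ enters, guaranteeing that the $\OO$-constant supplied by Theorem~\ref{thm:univ} can be chosen uniformly over the whole family $F_2^{d_{\max},c}$.
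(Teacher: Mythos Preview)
Your proposal is correct and follows essentially the same route as the paper: encode the unknown transformation $T$ and the block structure exactly in the linear layer $\ell=1$ (with zero-padding to dimension $d_{\max}$ and deactivated sublayers for $i>\tilde s$), observe that $TK_n\subset[-cC,cC]^n$ via $\|T\|_\infty\le c$, and then apply the one-hidden-layer estimate to layer $\ell=2$. The only cosmetic difference is that the paper invokes Proposition~\ref{prop:V1} as a black box for the upper part of the network, whereas you unfold its proof and apply Theorem~\ref{thm:univ} directly to each sublayer; the resulting neuron count and error bound are identical.
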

\begin{proof} Let $\tilde d_i$ be the (unknown) dimensions of the subsystems $\widetilde\Sigma_i$, $i=1,\ldots,\tilde s$, and $p_i = 1+ \sum_{k=1}^{i-1} \tilde d_k$ the first index of the variables $\tilde z_i$ of $\widetilde \Sigma_i$, i.e., $\tilde z_i=(\tilde x_{p_i},\ldots,$ $\tilde x_{p_{i+1}-1})^T$ for $\tilde x =Tx$. Using the notation from above and the fact that $\sigma^1(x)=x$, the values of the inputs $y^1_k$ of sublayer $\ell=1$ are given by 
\[ y^1_k = w_k^1 \cdot x + b_k^1. \]
Hence, by choosing $b_k^1=0$ and $w_k^1$ as the transpose of the \mbox{$j$-th} row of $T$, we obtain $y^1_k = \tilde x_j$. Hence, assigning the $b_k^1$ and $w_k^1$ this way for $k=(i-1)d_{\max}+j-p_i+1$, $j=p_i,\ldots,p_{i+1}-1$, and $i=1,\ldots,\tilde s$, and setting the remaining $b_k^1$ and $w_k^1$ to 0, we obtain 
\[ \bar y^1_i = \left(\begin{array}{c} \tilde z_i\\ 0 \\ \vdots \\ 0\end{array}\right)\]
for $i=1,\ldots,\tilde s$, where the number of the zeros equals $d_{\max}-d_i$. The inputs for the remaining sublayers $L_{\tilde s+1}$, $\ldots$, $L_n$ are $0$ since the corresponding $w^1_k$ and $b^1_k$ are set to $0$. For this choice of the parameters of the lower layer, each sublayer $L_i$ of the layer $\ell=2$ receives the transformed subsystem states $\tilde z_i$ (and a number of zeros) as input, or the input is $0$.

Since the additional zero-inputs do not affect the properties of the network, the upper part of the network, consisting of the hidden layer $\ell=2$ and the output, has exactly the structure of the network used in Proposition~\ref{prop:V1}. We can thus apply this proposition on the sets $\widetilde K_n = TK_n$ to the upper part of the network and obtain that it can realize a function $W(\tilde z;\theta)$ that approximates a Lyapunov function $\widetilde V$ for $\tilde f$ in the sense of Proposition \ref{prop:V1}. Note that since $\|T\|_\infty \le c$ and $K_n$ satisfies \eqref{eq:Kn}, we have that $\widetilde K_n = TK_n \subset [-cC,cC]^n$, hence $\widetilde K_n$ also satisfies \eqref{eq:Kn}. The fact that the constant bounding $\widetilde K_n$ is different now does not pose a problem when we apply Proposition~\ref{prop:V1}, as it only leads to different constants in the resulting term, which vanish in the $\OO$-term.

As the lower layer realizes the coordinate transformation $\tilde x = Tx$, the overall network $W(x;\theta)$ then approximates the function $V(x) := \widetilde V(Tx)$. By means of the invertibility of $T$ and the chain rule one easily checks that this is a Lyapunov function for $f$. The claim then follows since the number of neurons $N^2$ in the upper layer is equal to that given in Proposition \ref{prop:V1}, while that in the lower layer equals $N^1=nd_{\max}$. This leads to the overall number of neurons given in the theorem.
\end{proof}

\begin{remark} We note that the theorem remains true if the number of sublayers in the neural network from Figure \ref{fig:Lf_nn_l2} is reduced from $n$ to $n'$, as long as $n'\ge \tilde s$. Setting $n'=n$ ensures this inequality, but if a priori information about $\tilde s$ is available, then this could be used in order to reduce the size of the network.
\label{rem:n}\end{remark}

\section{Training the network}\label{sec:training}

For training the network in order to actually compute a Lyapunov function we need to specify a loss function $L:\R\times\R^n\times\R^n \to \R$. Training then consists of finding parameters $\theta$ such that 
\beq \frac1m \sum_{i=1}^m L\left(W(x^{(i)};\theta),D W(x^{(i)};\theta),x^{(i)}\right) \label{eq:loss}\eeq
becomes minimal, where $x^{(i)}\in K_n$ are the elements of a test dataset, which we refer to as test points. In our numerical results in the next section we always use $K_n=[-1,1]^n$ and the test points $x^{(i)}$ are chosen randomly and uniformly distributed from $K_n$. 

Note that in contrast to many other problems in deep learning the loss function $L$ also depends on the values of the derivative of $W$ with respect to $x$ in the test points, which we denote by $DW(x^{(i)},\theta)$. This is needed because in order to determine whether $W$ is a Lyapunov function, its derivative is needed. For minimizing the expression \eqref{eq:loss} a stochastic gradient algorithm can be used, which is standard in deep learning \cite{Bott10,BoCN18}. Details are specified in the following section.

The main work is now to design the loss function such that minimizing \eqref{eq:loss} w.r.t.\ $\theta$ yields a Lyapunov function. To this end, a straightforward idea is to express the Lyapunov function property as a partial differential equation (PDE) and follow the approaches in the literature for solving PDEs with neural networks mentioned in the introduction. A simple PDE that is suitable for this purpose is the Zubov-type equation
\beq DW(x;\theta)f(x) = - \|x\|^2,\label{eq:lfpde}\eeq
similar PDEs have been used or discussed, e.g., in \cite{CaGW00,Gies07,KiNB82,Zubo64}. This PDE needs to be complemented by suitable boundary conditions, which in the PDE setting (with $x=0$ being the equilibrium of interest) are of the form 
\[ W(0,\theta)  = 0 \quad \mbox{and} \quad W(x;\theta)>0 \mbox{ for all } x \in K_n\setminus\{0\}. \]
However, in this form the boundary conditions are difficult to be implemented numerically: the equality condition $W(0,\theta)  = 0$ is difficult because it is only given in a single point, while the strict ``$>$'' condition is difficult because numerically only ``$\ge$'' can be enforced directly. To resolve these problems, we replace the boundary conditions above by the stronger conditions
\beq \alpha_1(\|x\|) \le  W(x;\theta) \le \alpha_2(\|x\|) \mbox{ for all } x \in K_n, \label{eq:bcs}\eeq
with $\alpha_1,\alpha_2\in\KK$. Of course, the functions $\alpha_i$ have to be chosen appropriately in order to allow for the existence of a solution of \eqref{eq:lfpde} that satisfies \eqref{eq:bcs}. However, it follows from \cite{Zubo64} that if a Lyapunov function on $K_n$ exists, then it is always possible to find such $\alpha_i$. Loosely speaking, $\alpha_1$ must be sufficiently flat while $\alpha_2$ must be sufficiently steep. In case $x=0$ is exponentially stable and $f$ is continuously differentiable, one can choose the $\alpha_i$ as quadratic functions $\alpha_i(r) = c_ir^2$ with $c_1>0$ sufficiently small and $c_2>0$ sufficiently large, cf.\ \cite[Theorem 4.14]{Khal96}. 

Given the vector field $f$ from \eqref{eq:sys}, the loss function $L$ is now defined as 
\beq L(w,p,x) := \left(p f(x) + \|x\|^2\right)^2 + \nu \left(\,\left([w-\alpha_1(\|x\|)]_-\right)^2 + \left([w-\alpha_2(\|x\|)]_+\right)^2\,\right), \label{eq:pdeloss}\eeq
where $[a]_-:= \min\{a,0\}$, $[a]_+:= \max\{a,0\}$, and $\nu>0$ is a weighting parameter (chosen as $\nu=1$ in all our numerical examples in the next section). One easily checks that for this $L$ the expression \eqref{eq:loss} is always $\ge 0$ and equals $0$ if and only if \eqref{eq:lfpde} and \eqref{eq:bcs} are satisfied for all test points $x^{(i)}$. Conversely, if a Lyapunov function exists for which the bounds \eqref{eq:bcs} are feasible, and if this Lyapunov function can be represented by neural network under consideration, the minimizing \eqref{eq:loss} w.r.t.\ $\theta$ will result in the optimal value of \eqref{eq:loss} being $0$.

Unfortunately, while this approach works in principle, it is not necessarily compatible with the complexity analysis from the previous section. The reason is that when a Lyapunov function with the particular small gain structure \eqref{eq:sglf} exists, it may not be a solution of \eqref{eq:lfpde}, \eqref{eq:bcs}. As a consequence, while a solution of \eqref{eq:lfpde}, \eqref{eq:bcs} may exist, it may not be representable by the neural network structure from Figure \ref{fig:Lf_nn_l1} or Figure \ref{fig:Lf_nn_l2}. Hence, with the choice of $L$ from \eqref{eq:pdeloss}, it may not be possible to exploit the low computational complexity provided by this particular network structure. The result depicted in Figure \ref{fig:Lyapunov_2d_eq}, below, shows that this indeed happens. 

Hence, we need to provide more flexibility to our approach, which we can do by enlarging the set of minima of the loss function. To this end, note that \eqref{eq:lfpde} is actually a much too strong condition. Requiring the partial differential inequality (PDI)
\beq DW(x;\theta)f(x) \le - \|x\|^2,\label{eq:lfpdi}\eeq
instead of \eqref{eq:lfpde}, will also yield a Lyapunov function. While one may argue that the bound ``$-\|x\|^2$'' on the derivative is somewhat arbitrary here, it is easily seen that by appropriate rescaling any Lyapunov function can be modified such that this bound holds. Hence, modifying the right hand side of \eqref{eq:lfpdi} does not provide more flexibility (but, of course, it affects the set of $\alpha_i$ for which \eqref{eq:lfpdi} and \eqref{eq:bcs} together are feasible). 

Incorporating \eqref{eq:lfpdi} instead of \eqref{eq:lfpde} in the loss function $L$ leads to the expression
\beq L(w,p,x) := \left(\left[p f(x) + \|x\|^2\right]_+\right)^2 + \nu \left(\,\left([w-\alpha_1(\|x\|)]_-\right)^2 + \left([w-\alpha_2(\|x\|)]_+\right)^2\,\right). \label{eq:pdiloss}\eeq
One easily checks that for this $L$ the expression \eqref{eq:loss} is again always $\ge 0$, but now it equals $0$ if and only if \eqref{eq:lfpdi} and \eqref{eq:bcs} are satisfied for all test points $x^{(i)}$. As Example \ref{eq:Lyapunov_2d_le} and Figure \ref{fig:Lyapunov_2d_le}, below, show, this indeed allows to use the network structure from the previous section and it also allows for solving higher dimensional problems, see Example \ref{eq:Lyapunov_10d}.

\section{Numerical examples}\label{sec:numerics}

We illustrate the proposed method with two examples, a low-dimensional one that shows that the the loss function \eqref{eq:pdiloss} is in general preferable over \eqref{eq:pdeloss} and a larger one that shows the ability of the method to work in find Lyapunov functions in higher dimensions. All computations were performed with Python 3.7.0 and TensorFlow 2.1.0 \cite{tens15} on a MacBook Pro (2017, 2.3 GHz Intel Core i5) running macOS Mojave (10.14.6). The python code and the trained networks are available from \href{http://numerik.mathematik.uni-bayreuth.de/\%7Elgruene/DeepLyapunov/}{numerik.mathematik.uni-bayreuth.de/$\sim$lgruene/DeepLyapunov/}. 

Our first example considers a two-dimensional example that has a compositional Lyapunov function consisting of two one-dimensional functions. It is given by 
\beq
\begin{array}{rcl}
\dot x_1 & = & -x_1-10x_2^2\\
\dot x_2 & = & -2x_2.
\end{array}
\label{eq:Lyapunov_2d_le}
\eeq
Using the Lyapunov-function candidate $V(x)=x_1^2 + x_2^2 + 13 x_2^4$, one computes
\[ DV(x)f(x) = -2x_1^2 - 20x_1x_2^2 - 4x_2^2 - 104 x_2^4. \]
Since 
\[ -x_1^2 - 20x_1x_2^2 - 104 x_2^4 \le -x_1^2 - 20x_1x_2^2 - 100 x_2^4 = -(x_1+10x_2^2)^2 \le 0, \]
we obtain $DV(x)f(x) \le -x_1^2-4x_2^2 \le -\|x\|^2$. Hence, $V$ is a Lyapunov function and it is obviously of the compositional form \eqref{eq:sglf} with $z_1=x_1$ and $z_2=x_2$. 

It should thus be possible to compute a Lyapunov function with the neural network from Figure \ref{fig:Lf_nn_l2}. 
It turns out that using the loss function \eqref{eq:pdiloss} (with $\alpha_1(r)=0.1r^2$ and $\alpha_2(r)=10r^2$) this is indeed possible. 
Here we used the network structure from Figure \ref{fig:Lf_nn_l2} with $n=2$ and $d_{\max}=1$, with the layers $L_1$ and $L_2$ consisting of 128 neurons, each, and softplus activation functions $\sigma^2(r)=\ln(e^r+1)$, resulting in 775 trainable parameters. The training was performed with 200\,000 test points\footnote{In all examples, the number of test points was increased until the results produced satisfactory Lyapunov functions.}, optimizing with batch size 32 using the Adam optimizer implemented in TensorFlow. The optimization was terminated when the accuracy for the final function $W(\cdot,\theta^*)$ satisfied\footnote{Since $L$ consists of squared penalization terms, $err_1$ is effectively the squared weighted $\|\cdot\|_2$-norm of the penalization terms.}
\[ err_1 := \frac1m \sum_{i=1}^m L\left(W(x^{(i)},\theta^*),D W(x^{(i)},\theta^*),x^{(i)}\right)  < 10^{-6} \]
and 
\[ err_\infty :=\max_{i=1,\ldots,m} L\left(W(x^{(i)},\theta^*),D W(x^{(i)};\theta^*),x^{(i)}\right) < 10^{-6},\]
which was reached after 6 epochs in the run documented here.\footnote{As the test points are random, the results of the training optimization are random, too. The error tolerance $10^{-6}$ was sometimes reached already after 4 epochs and sometimes it was not reached until epoch 20. In all successful runs, the resulting Lyapunov was very similar to the one depicted here.} The time needed for the optimization was 48s.
Figure \ref{fig:Lyapunov_2d_le} shows the computed approximate Lyapunov function $W(\cdot,\theta^*)$ as a solid surface along with its derivative along the vector field $DW(x;\theta^*)f(x)$ as a wireframe, shown from two different angles. The graphs illustrate that the method was successful. 

\begin{figure}[htb]
\begin{center}
\includegraphics[width=6cm]{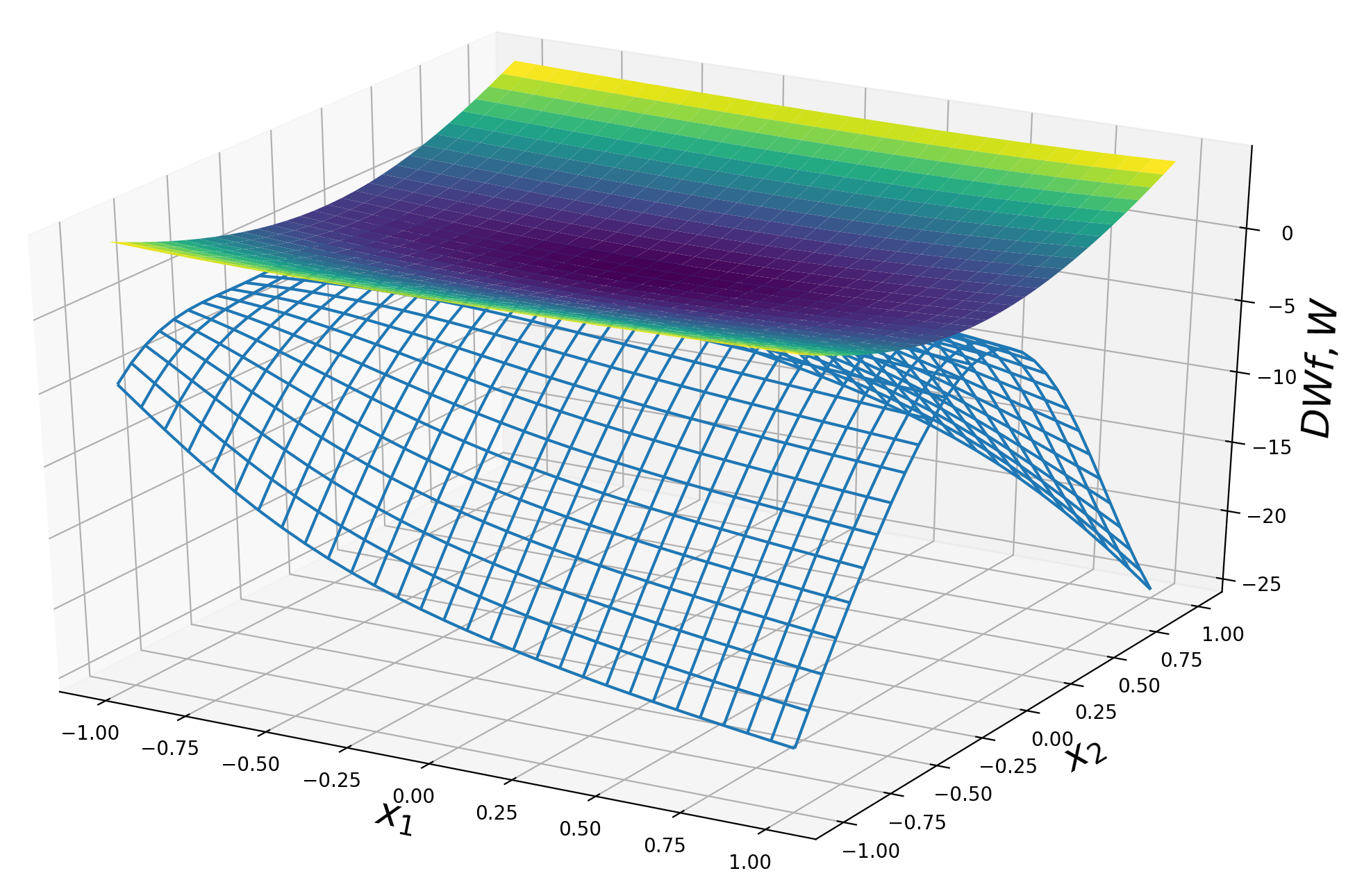}
\includegraphics[width=6cm]{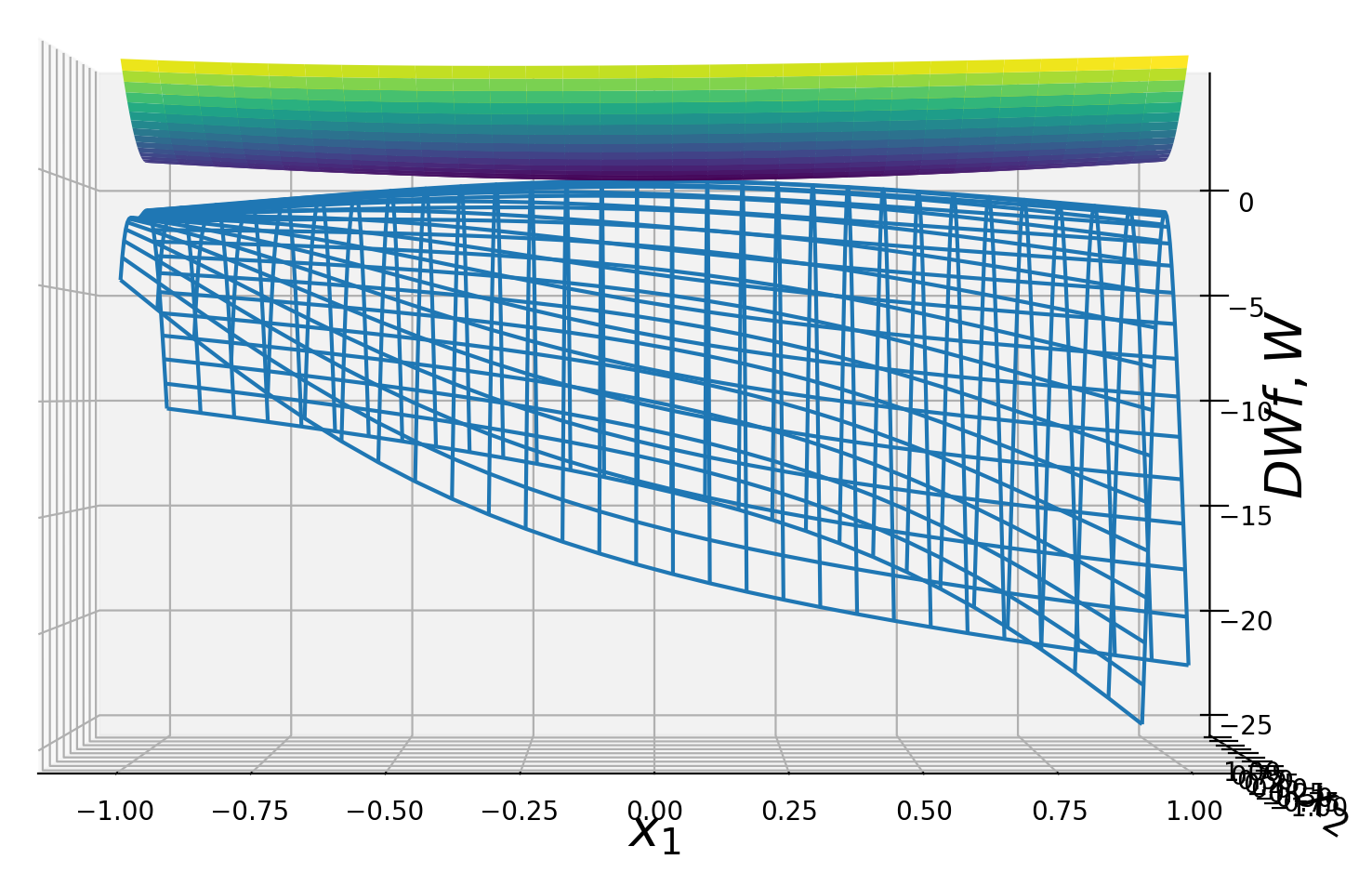}
\end{center}
\caption{Approximate Lyapunov function $W(\cdot;\theta^*)$ (solid) and its orbital derivative $DW(\cdot;\theta^*)f$ (mesh) for Example \eqref{eq:Lyapunov_2d_le} computed with loss function \eqref{eq:pdiloss}}
\label{fig:Lyapunov_2d_le}
\end{figure}

In contrast to this, performing the computation with the same parameters but with loss function \eqref{eq:pdeloss} fails. As Figure \ref{fig:Lyapunov_2d_eq} shows, the derivative $DW(x;\theta^*)f(x)$ (shown as a wireframe) obviously not satisfy the equation $DW(x;\theta^*)f(x)=-\|x\|^2$. This is also visible in the values
\[ err_1   = 1.363842 \;\; \mbox{ and } \;\; err_\infty = 3.110839\]
that were reached after 20 epochs\footnote{In all runs these error values did not change significantly anymore after epoch 15. In some runs the resulting function had a different shape, but in all cases it visibly violated the required inequalities.}. While this alone would not be a problem (as long as $DW(x;\theta^*)f(x)$ is still negative definite), the inability to meet this equation has the side effect that the optimization also does not enforce the inequalities \eqref{eq:bcs}. As a consequence, the minimum of the computed function is not located in the equilibrium at the origin, as the lateral view on the right of Figure \ref{fig:Lyapunov_2d_eq} shows. This is because it is more difficult to represent a Lyapunov function satisfying $DV(x)f(x)=-\|x\|^2$ with the network structure from 
Figure \ref{fig:Lf_nn_l2}. While this example does, of course, not exclude that the loss function \eqref{eq:pdeloss} works for other parameters, it provides evidence that the advantage in computational complexity offered by our approach is more easily exploited using the loss function \eqref{eq:pdiloss}. Moreover, it illustrates the effect when the parameter $d_{\max}$ underestimates the maximal dimension of the subsystems.

\begin{figure}[htb]
\begin{center}
\includegraphics[width=6cm]{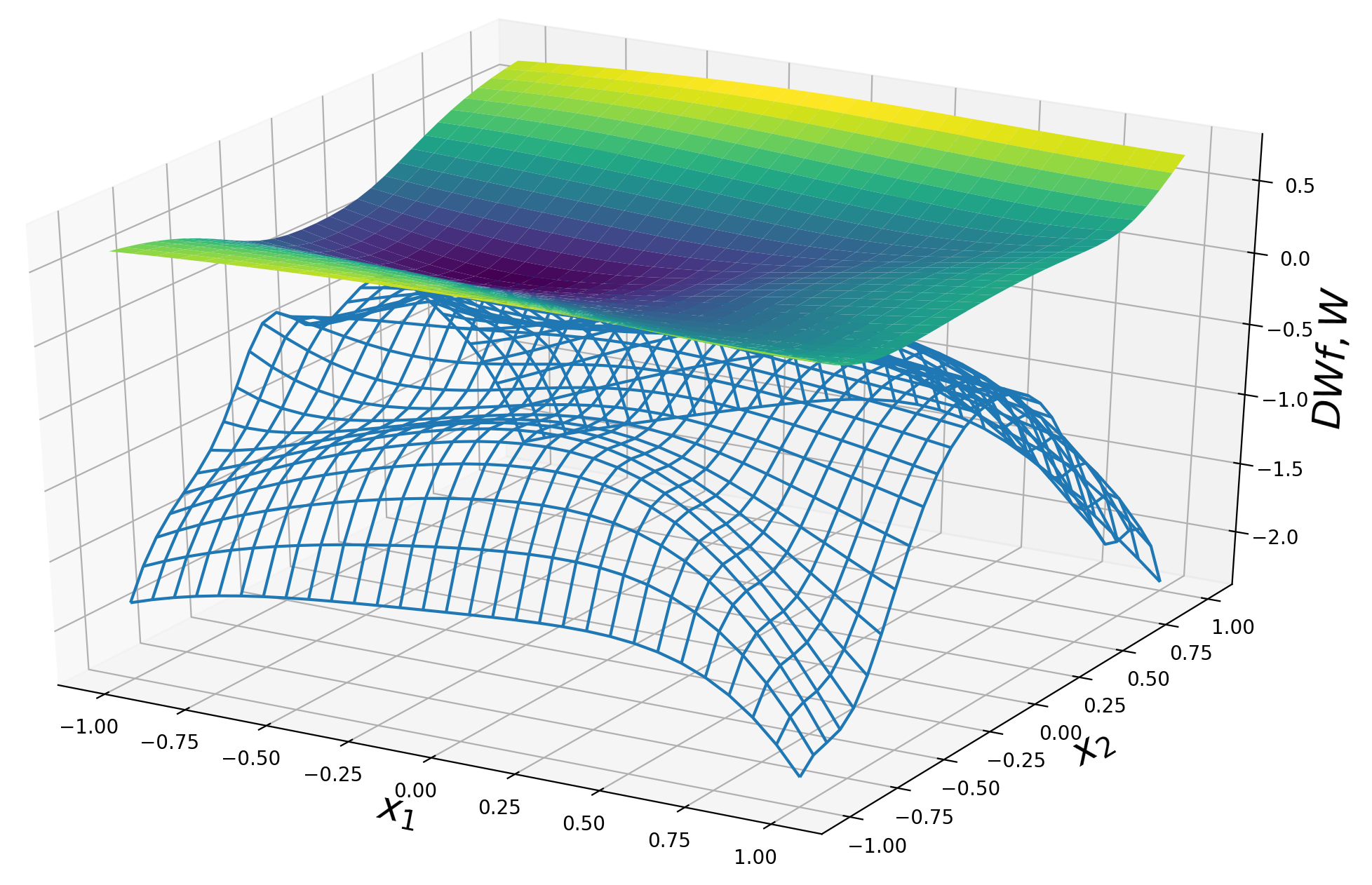}
\includegraphics[width=6cm]{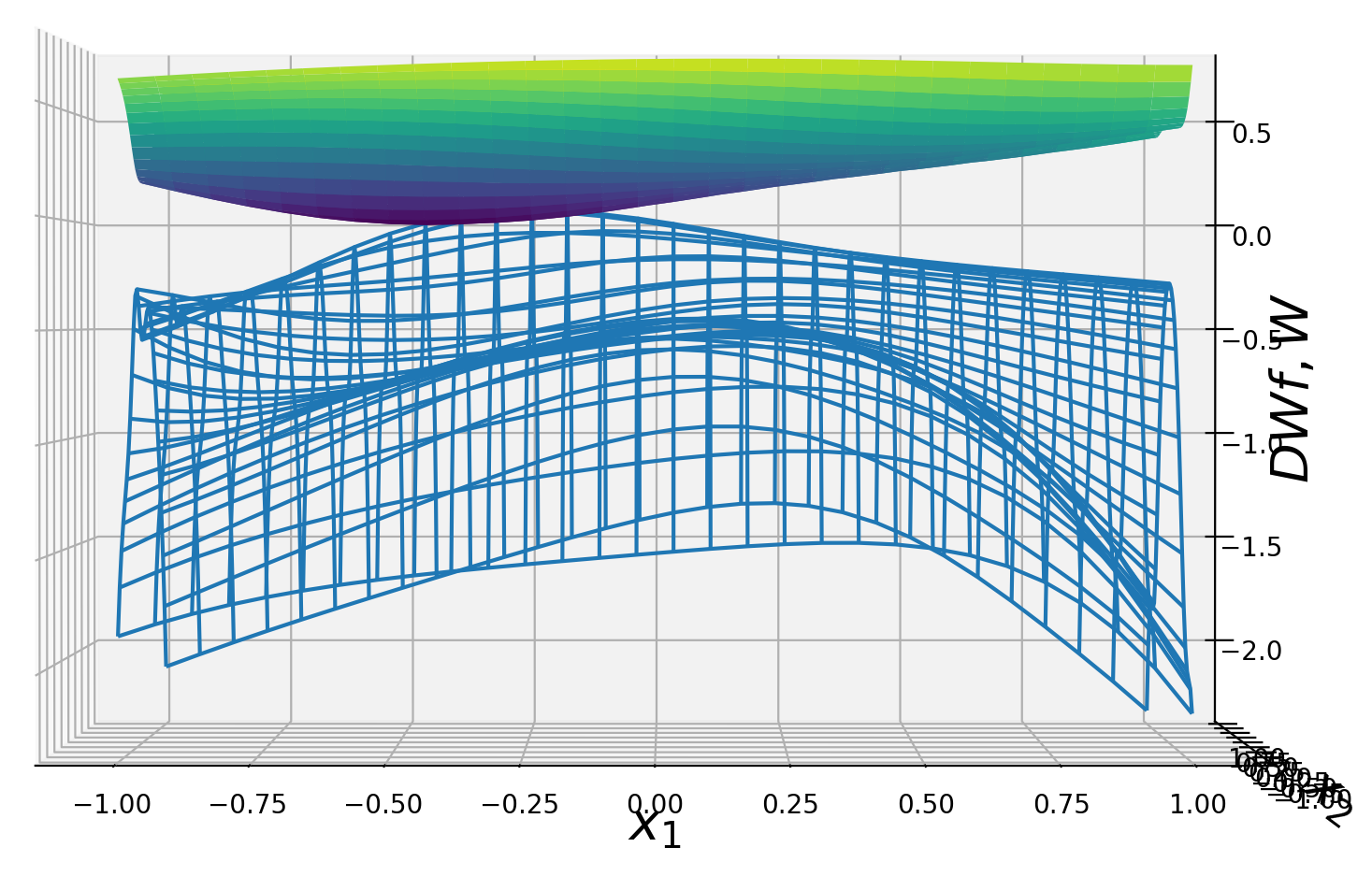}
\end{center}
\caption{Attempt to compute a Lyapunov function $W(\cdot;\theta^*)$ (solid) with its orbital derivative $DW(\cdot;\theta^*)f$ (mesh) for Ex.\ \eqref{eq:Lyapunov_2d_le} with loss function \eqref{eq:pdeloss}}
\label{fig:Lyapunov_2d_eq}
\end{figure}

In our second example we illustrate the capability of our approach to handle higher dimensional systems and to determine the subspaces for the compositional representation of $V$. To this end we consider a 10-dimensional example of the form
\beq 
\dot x = f(x) := T^{-1}\hat f(Tx). \label{eq:Lyapunov_10d}
\eeq
with vector field $\hat f:\R^{10}\to\R^{10}$ given by
\[ \hat f(x) = \left(\begin{array}{l}
-     x_1 + 0.5x_2 - 0.1x_9^2\\
      - 0.5x_1 -     x_2\\
      -     x_3 + 0.5x_4 - 0.1x_1^2\\
      - 0.5x_3 -     x_4\\
      -     x_5 + 0.5x_6 + 0.1x_7^2\\
      - 0.5x_5 -     x_6\\
      -     x_7 + 0.5x_8\\
      - 0.5x_7 -     x_8\\
      -     x_9 + 0.5x_{10}\\
      - 0.5x_9 -     x_{10}+ 0.1x_2^2
      \end{array}\right)
      \]
      
One easily sees that this system consists of five two-dimensional asymptotically stable linear subsystems that are coupled by four nonlinearities with small gains. It is thus to be expected that on $K_{10}=[-1,1]^{10}$ the system is asymptotically stable and a Lyapunov function can be computed using the network from Figure \ref{fig:Lf_nn_l2} five two-dimensional sublayers $L_1,\ldots,L_5$. The coordinate transformation $T\in\R^{10\times 10}$ is given by the (randomly generated) matrix
\[ T = \left(\begin{array}{rrrrrrrrrr}
-\frac15 & -\frac3{10} & \frac12 & -\frac45 & \frac45 & \frac25 & \frac7{10} & \frac7{10} & -1 & \frac45\\[1ex]
\frac15 & 1 & \frac9{10} & \frac45 & -\frac1{10} & \frac35 & -\frac3{10} & \frac12 & \frac45 & -\frac3{10}\\[1ex]
-\frac3{10} & \frac3{10} & \frac25 & -\frac25 & 0 & -\frac35 & \frac3{10} & \frac35 & 1 & -\frac12\\[1ex]
-\frac7{10} & -\frac1{10} & -\frac35 & -\frac15 & -\frac35 & \frac25 & \frac1{10} & -\frac1{10} & \frac1{10} & -\frac35\\[1ex]
\frac1{10} & -\frac35 & -\frac9{10} & -\frac7{10} & -\frac15 & -\frac1{10} & \frac1{10} & \frac15 & 0 & -\frac45\\[1ex]
\frac35 & \frac9{10} & -\frac15 & 1 & \frac25 & \frac12 & 0 & -\frac1{10} & -\frac25 & 0\\[1ex]
-1 & 1 & \frac7{10} & \frac35 & -\frac45 & -\frac45 & 0 & -\frac15 & -\frac15 & \frac7{10}\\[1ex]
-\frac9{10} & \frac45 & \frac15 & 1 & -\frac45 & \frac25 & -\frac3{10} & \frac7{10} & \frac15 & -\frac45\\[1ex]
\frac35 & -\frac1{10} & -\frac25 & -\frac12 & -\frac3{10} & -\frac1{10} & -\frac7{10} & 1 & \frac45 & -\frac3{10}\\[1ex]
0 & -1 & -\frac1{10} & \frac25 & -\frac3{10} & -\frac1{10} & -\frac15 & \frac7{10} & -\frac1{10} & \frac45
\end{array}\right).
\]

We have computed a Lyapunov function for this system for the loss function \eqref{eq:pdiloss} with $\alpha_1(r)=0.2r^2$ and $\alpha_2(r)=10r^2$. 
We used the network structure from Figure \ref{fig:Lf_nn_l2} and Remark \ref{rem:n} with $n'=5$ and $d_{\max}=2$, with the layers $L_1,\ldots,L_5$ consisting of 128 neurons, each, leading to 2671 trainable parameters. The training was performed with 400\,000 test points, optimizing over 13 epochs. As for the 2d example, we used batch size 32, the Adam optimizer implemented in TensorFlow, and softplus activation functions $\sigma^2$. The time needed for the training was 266s\footnote{The time for the evaluation of $W(x;\theta^*)$ in 10\,000 test points takes 0.3s, while the evaluation of the derivative $DW(x;\theta^*)$ in 10\,000 test points takes 0.1s.}  and the resulting function satisfies the inequalities
\[ err_1 < 10^{-6}, \quad err_\infty < 10^{-6}.\]
Figures \ref{fig:Lyapunov_10d_x28} and \ref{fig:Lyapunov_10d_x910} show the resulting function $W(\cdot;\theta^*)$ (solid) and its derivative along $f$ (wireframe) on the $(x_2,x_8)$-plane and the $(x_9,x_{10})$-plane, respectively. The remaining components of $x$ were set to $0$ in both figures. Figure \ref{fig:Lyapunov_10d_tra} shows the value of $W(\cdot;\theta^*)$ along three trajectories of \eqref{eq:Lyapunov_10d} (computed numerically using the ode45-routine from matlab). It shows the strict decrease that is expected from a Lyapunov function.

\begin{figure}[htb]
\begin{center}
\includegraphics[width=6cm]{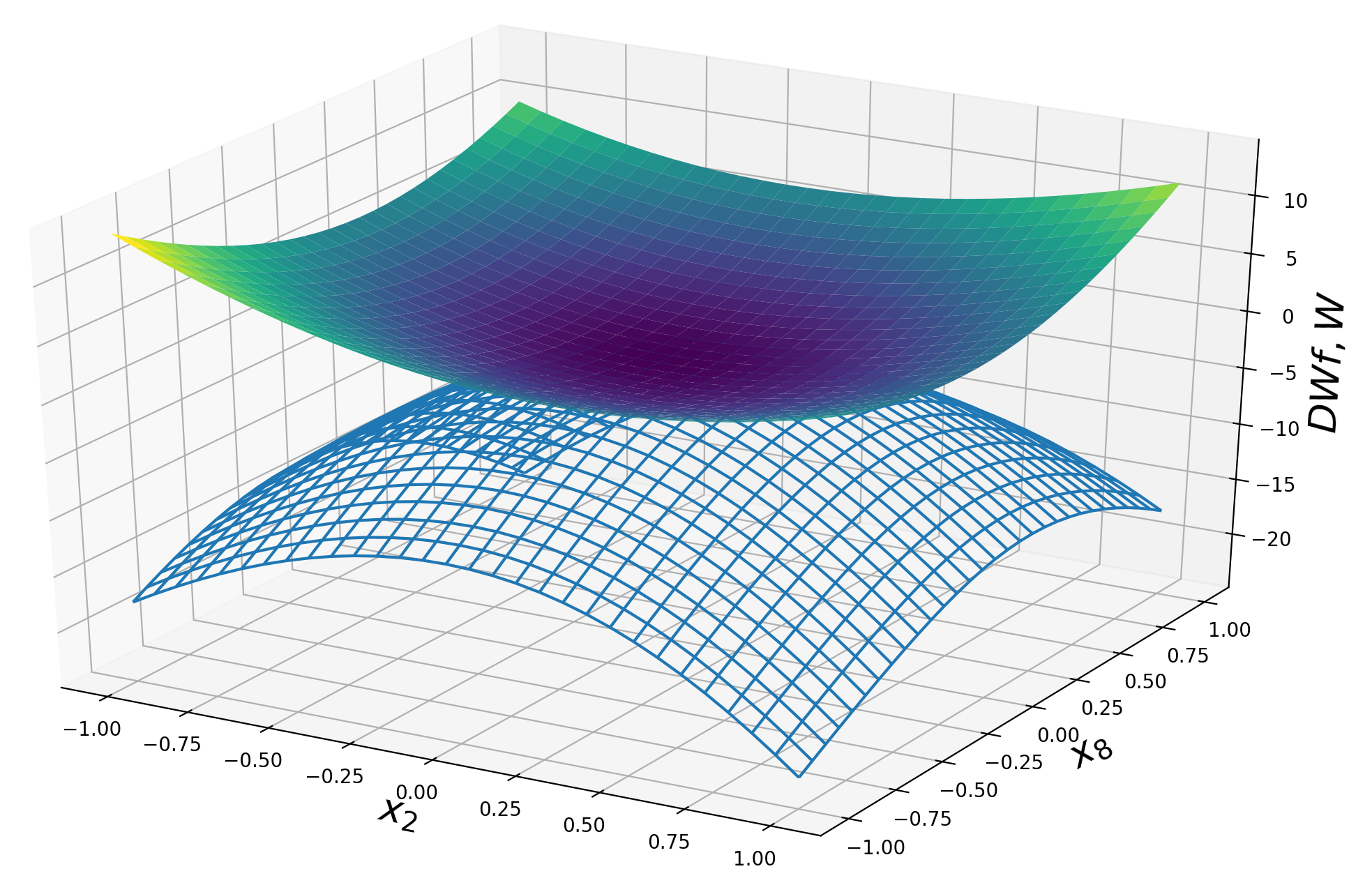}
\includegraphics[width=6cm]{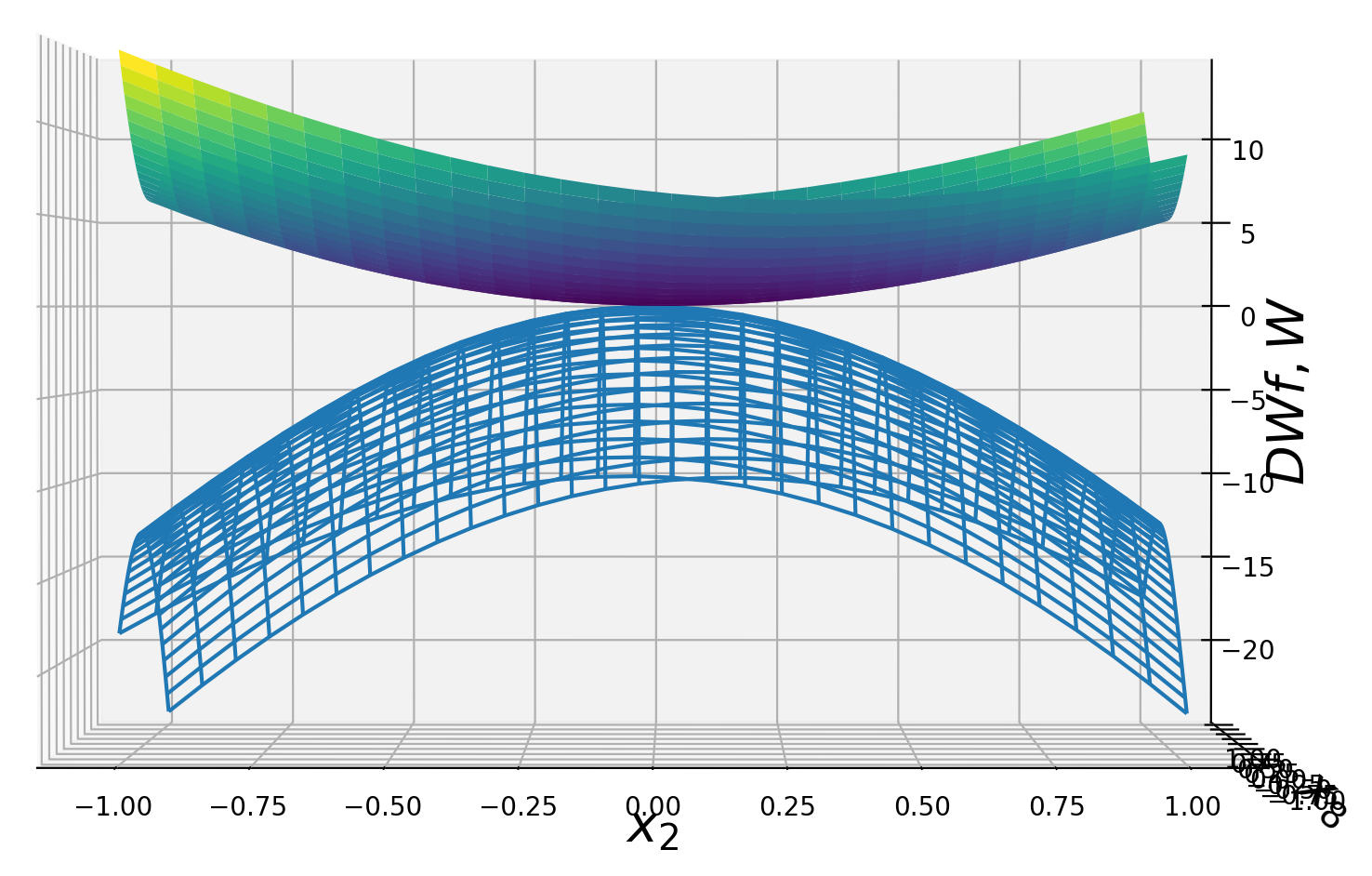}
\end{center}
\caption{Approximate Lyapunov function $W(\cdot;\theta^*)$ (solid) and its orbital derivative $DW(\cdot;\theta^*)f$ (mesh) for Example \eqref{eq:Lyapunov_10d} on $(x_2,x_8)$-plane}
\label{fig:Lyapunov_10d_x28}
\end{figure}

\begin{figure}[htb]
\begin{center}
\includegraphics[width=6cm]{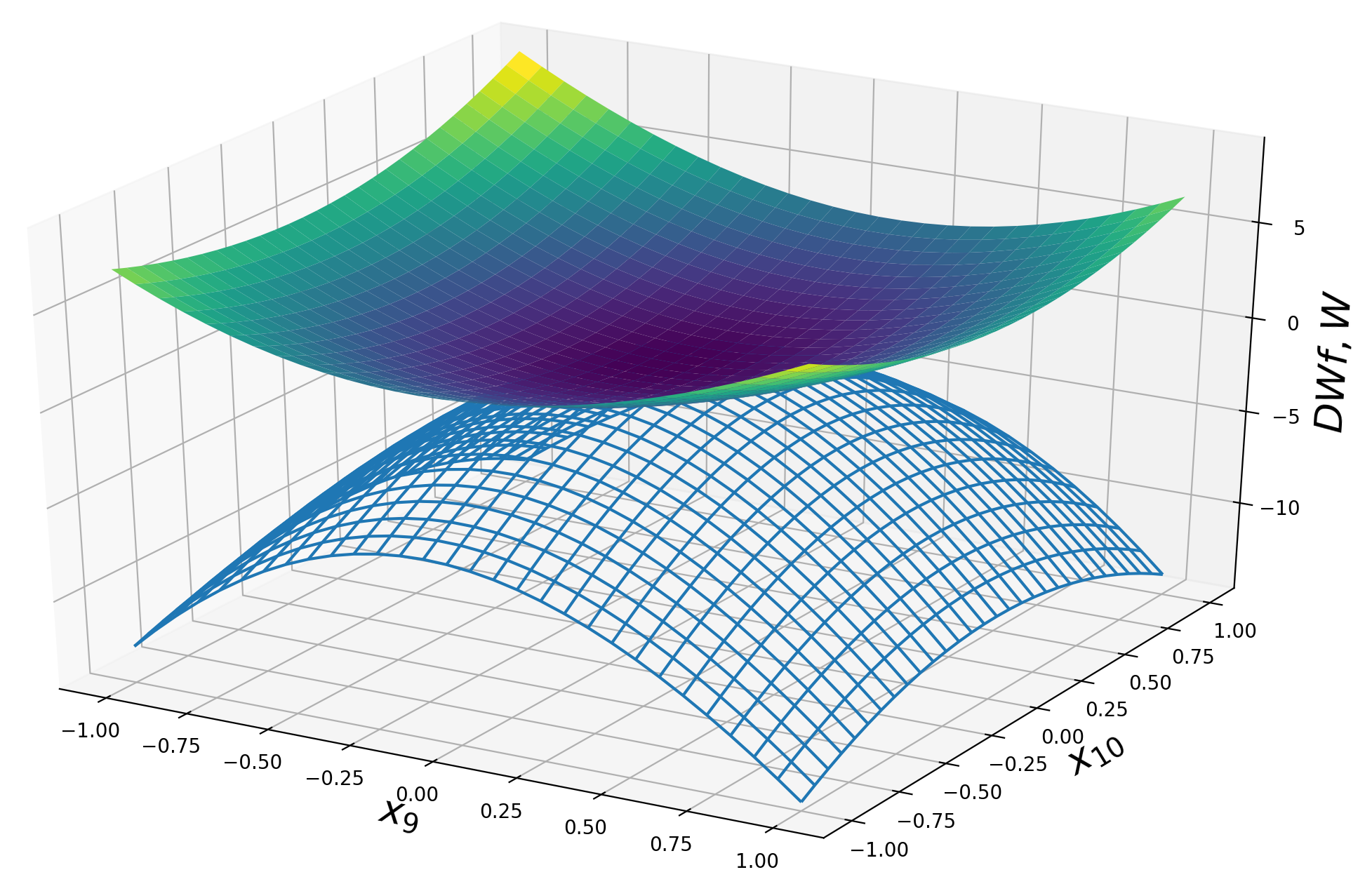}
\includegraphics[width=6cm]{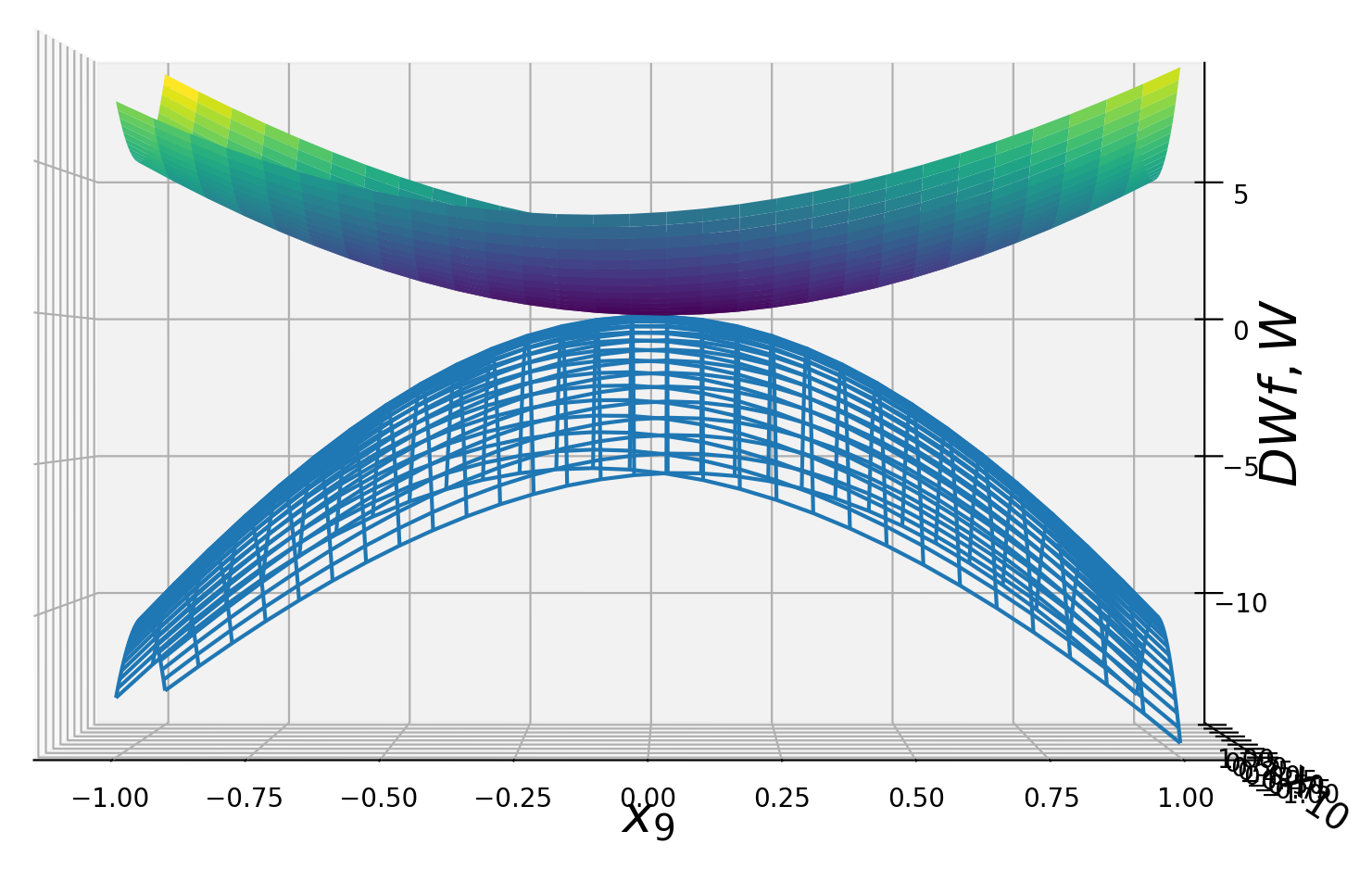}
\end{center}
\caption{Approximate Lyapunov function $W(\cdot;\theta^*)$ (solid) and its orbital derivative $DW(\cdot;\theta^*)f$ (mesh) for Example \eqref{eq:Lyapunov_10d} on $(x_9,x_{10})$-plane}
\label{fig:Lyapunov_10d_x910}
\end{figure}

\begin{figure}[htb]
\begin{center}
\includegraphics[width=4cm]{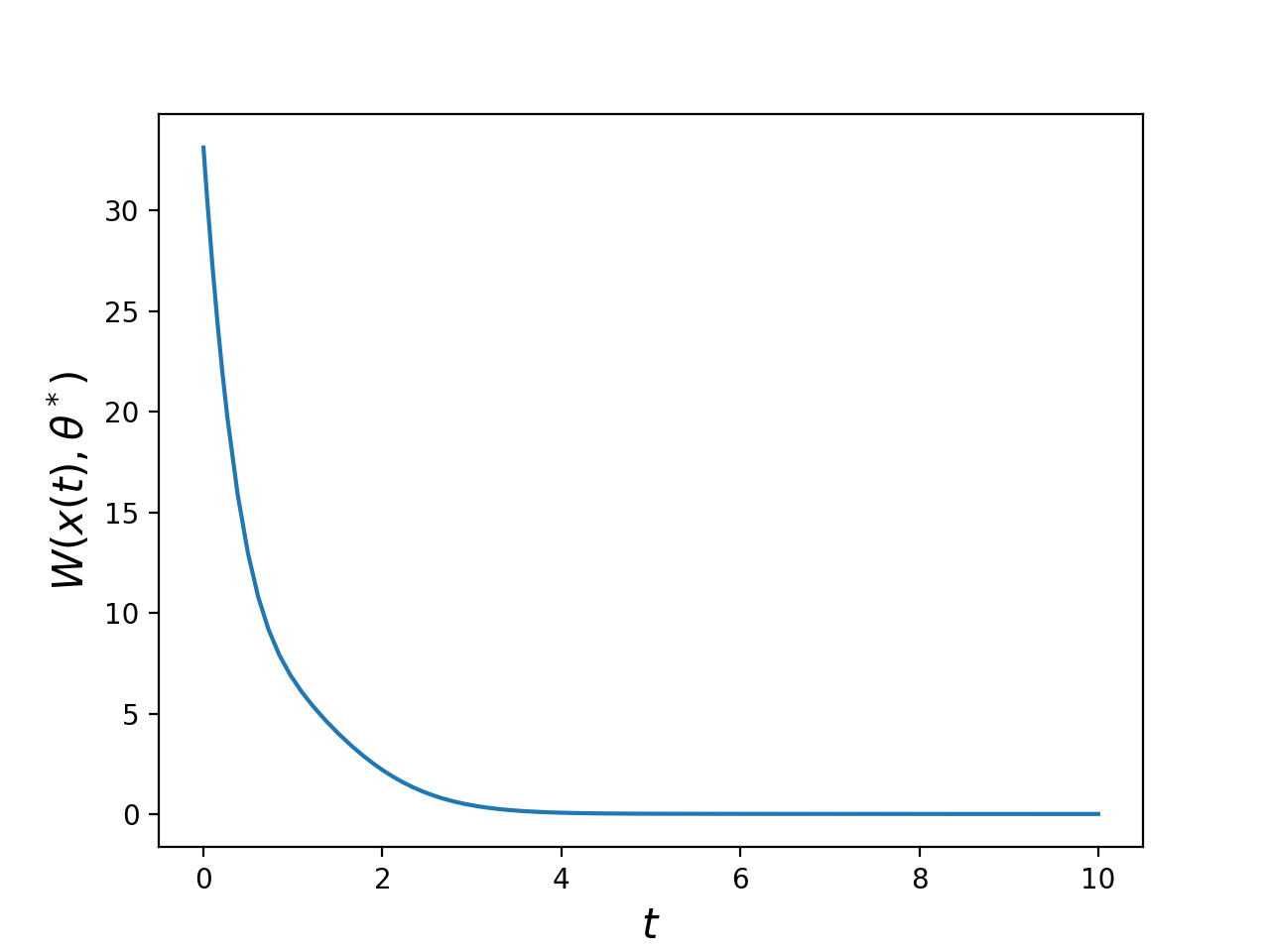}
\includegraphics[width=4cm]{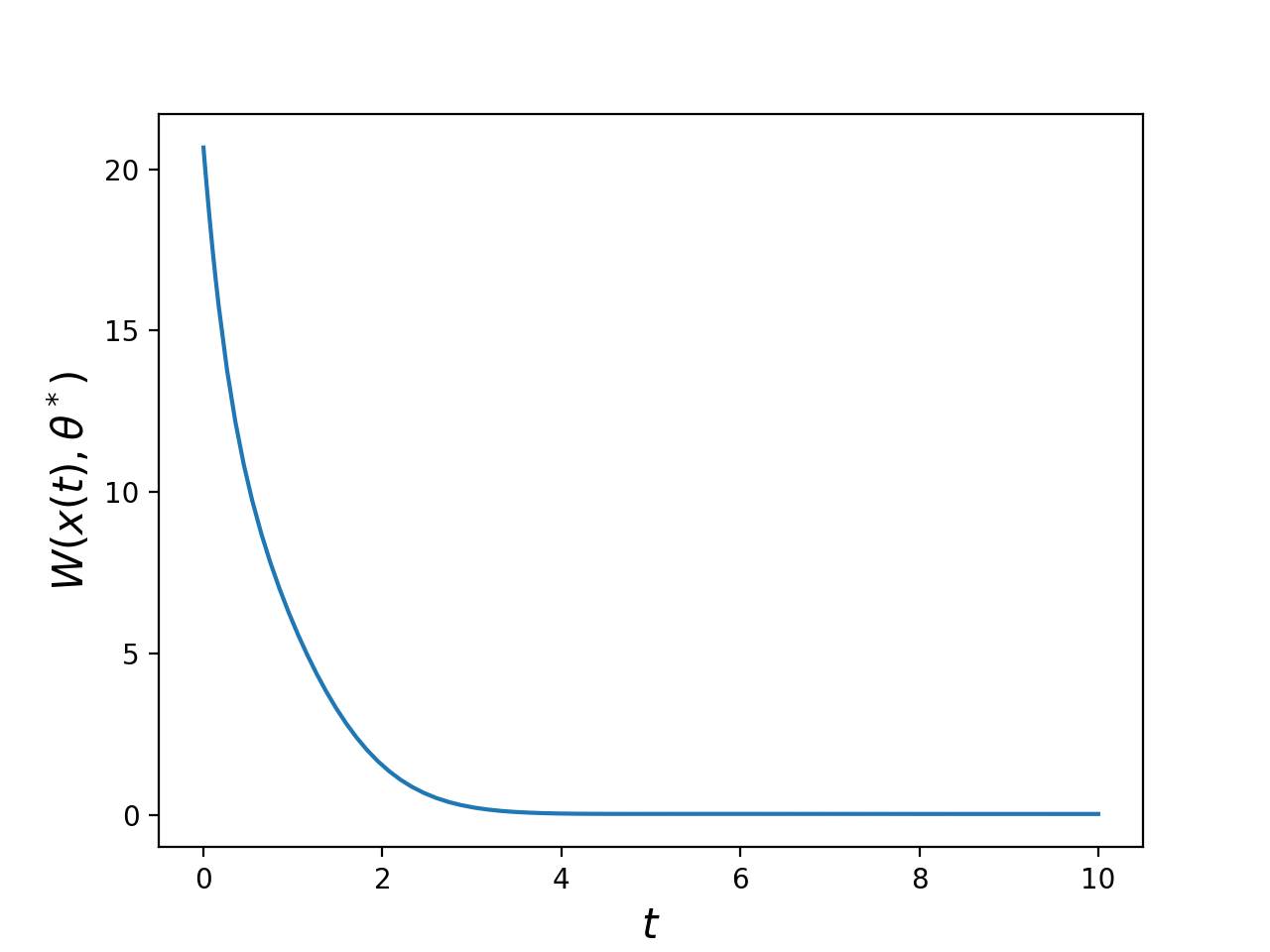}
\includegraphics[width=4cm]{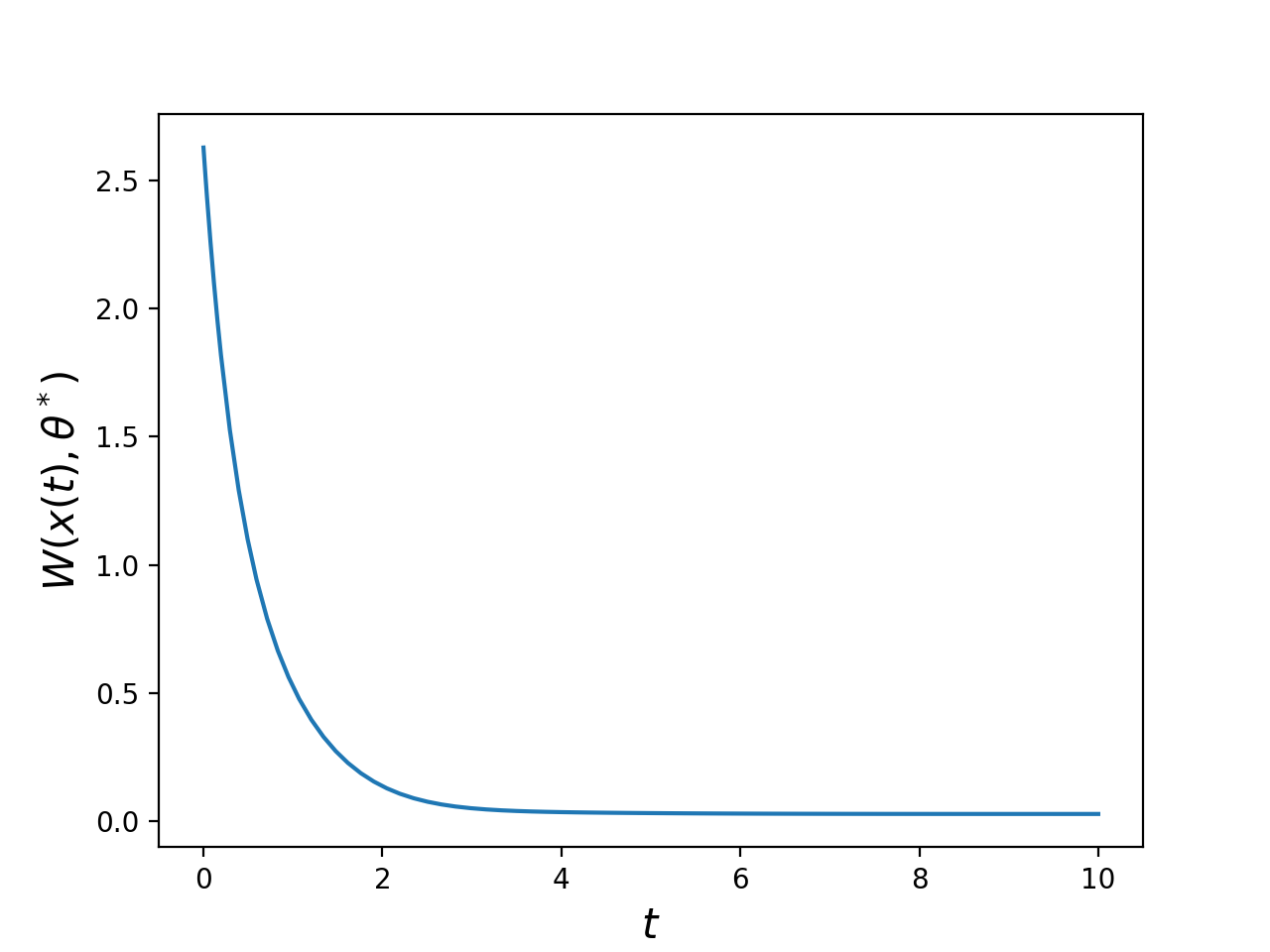}
\end{center}
\caption{Value of approximate Lyapunov function $W(x(t);\theta^*)$ along trajectories for initial values $x_0=(1,1,1,1,1,1,1,1,1,1)^T$, $(0,1,0,1,0,1,0,1,0,1)^T$, $(1,0,0,0,0,0,0,0,0,0)^T$ (left to right)}
\label{fig:Lyapunov_10d_tra}
\end{figure}

\section{Discussion}\label{sec:discussion}

In this section we discuss a few aspects and possible extensions of the results in this paper.

\begin{enumerate}[(i)]
\item From the expressions for $N$ in Proposition \ref{prop:V1} and Theorem \ref{thm:V2} one sees that for a given $\eps>0$ the storage effort only grows polynomially in the state dimension $n$, where the exponent is determined by the maximal dimension of the subsystems $d_{\max}$. The proposed approach hence avoids the curse of dimensionality, i.e., the exponential growth of the effort. There is, however, an exponential dependence on the maximal dimension $d_{\max}$ of the subsystems $\Sigma_i$ for the compositional Lyapunov functions \eqref{eq:sglf}. This is to be expected, because the construction relies on the low-dimensionality of the $\Sigma_i$ and if this is no longer given, we cannot expect the method to work efficiently.

\item We stress that our theoretical results only guarantee that the computed functions $W(\cdot;\theta^*)$ are approximations to Lyapunov functions rather than true Lyapunov functions. However, the figures of the graphs of $W$ and $DW f$ as well as further numerical tests suggest that the computed functions are indeed Lyapunov functions, except in small neighborhoods of the equilibrium $0$. However, it is currently unclear how this can be verified rigorously. In low dimensions a grid based method such as the check of \cite[inequality (3)]{HaKL14} proposed in \cite{HaKL14} might be feasible, but in higher dimensions new methods for such a verification need to be developed. Here the fact that the neural network provides an explicit analytic, albeit complex, expresssion for $W(\cdot;\theta^*)$ may be helpful.

\item There have been attempts to use small-gain theorems for grid-based constructions of Lyapunov functions, e.g., in \cite{CaGW09,Li15}. The problem of such a construction, however, is, that it computes the functions $\hat V_i$ from Theorem \ref{thm:sg} separately for the subsystems and the small-gain condition has to be checked a posteriori (which is a difficult task). The representation via the neural network does not require to check the small-gain condition nor is the precise knowledge of the subsystems necessary. 

\item \label{it:u} The reasoning in the proofs remains valid if we replace $f(x)$ by $f(x,u)$ and asymptotic stability with ISS. Indeed, we can simply incorporate $u$ as an additional external input in the small-gain formulation, which is standard in small-gain theory. Hence, the proposed network is also capable of efficiently storing ISS and iISS Lyapunov functions. Moreover, an extension to control Lyapunov functions appears attractive, as these functions allow to derive stabilizing feedback laws for nonlinear systems. However, the corresponding extension of the proposed training scheme is nontrivial and is thus subject of future research.

\item \label{it:relu} In current neural network applications ReLU activation functions $\sigma(r) = \max\{r,0\}$ are often preferred over $C^\infty$ activation functions, such as the softplus function used in our implementation (which is, in fact, a smooth approximation to the ReLU activation function). The obvious disadvantage of this concept is that the resulting function $W(x;\theta)$ is nonsmooth in $x$, which implies the need to use concepts of nonsmooth analysis for interpreting it as a Lyapunov function. While one may circumvent the need to compute the derivative of $W$ by means of using nonsmooth analysis or by passing to an integral representation of \eqref{eq:lf}
, the nonsmoothness implies that the gradient $DW$ in the training scheme needs to be replaced by an appropriate substitute.  Details are subject to future research and it remains to be explored whether the difficulties caused by the nonsmoothness of $W$ are compensated by the advantages of ReLU activation functions.

\item \label{it:maxlf} There are other types of Lyapunov function constructions based on small-gain conditions different from Definition \ref{def:sg}, e.g., a construction of the form 
\[ V(x) = \max_{i=1,\ldots,s} \rho_i^{-1}(V_i(z_i)), \]
found in \cite{DaRW10,Ruef07}. Since maximization can also be efficiently implemented in neural networks (via max pooling), such ``max-compositional'' Lyapunov functions also admit an efficient approximation via deep neural networks. However, when using this formulation we have to cope with two sources of nondifferentiability that complicate the analysis. One source is the maximization in the definition of $V$ and the other source are the functions $\rho_i^{-1}\in\KK_\infty$, which in most references are only ensured to be Lipschitz.

\item Clearly, when using the inequality-based loss function \eqref{eq:pdiloss}, then the result of the algorithm is not unique. It may thus be desirable to specify additional criteria that single out particularly useful Lyapunov functions from the set of possible solutions, such as Lyapunov functions avoiding highly degenerate level sets, guaranteeing a large domain of attraction. Such criteria have already been employed in the context of the piecewise affine approximation approach \cite{GieH14} and the ideas developed there could be also be investigated for the neural network approximation.
\end{enumerate}

\section{Conclusion}\label{sec:conclusion}
We have proposed a class of deep neural networks that allows for approximating Lyapunov functions $V$ having a compositional structure. Such Lyapunov functions exist, e.g., when the systems satisfies a small-gain condition. The number of neurons needed for an approximation with fixed accuracy depends exponentially on the maximal dimension of the subsystems in the compositional representation of $V$, but only polynomially on the overall state dimension. Thus, it provably avoids the curse of dimensionality, a feature that to the best of our knowledge is not available for similar approaches in the literature. Except for the upper bound $d_{\max}$, the network structure does not need any knowledge about the dimensions of the subsystems and the approach even allows for a subsystem structure that only becomes visible after a linear coordinate transformation. 

We also presented a loss function for a training scheme for the proposed architecture that is based on a suitable partial differential inequality and boundary conditions. By means of numerical examples we demonstrated that this approach is beneficial compared to a loss function based on a partial differential equation and that it produces excellent results in ten space dimensions. This dimension is significantly larger than those reported for other numerical approaches for nonlinear systems in the literature, particularly for grid based methods. As discussed in Section \ref{sec:discussion}, the approach allows for manifold extensions that will be subject of future research.

\providecommand{\href}[2]{#2}
\providecommand{\arxiv}[1]{\href{http://arxiv.org/abs/#1}{arXiv:#1}}
\providecommand{\url}[1]{\texttt{#1}}
\providecommand{\urlprefix}{URL }

\end{document}